\documentclass[12pt]{amsart}

\usepackage{amsmath}
\usepackage{stmaryrd}
\usepackage{amsfonts}
\usepackage{amssymb}
\usepackage{amsthm}
\usepackage{hyperref}
\usepackage{graphicx}
\usepackage{enumerate}
\usepackage{cleveref}
\usepackage{mathrsfs}
\usepackage{geometry}

\newtheorem{theorem}{Theorem}[section]
\newtheorem*{thm*}{Theorem}
\numberwithin{equation}{section}
\newtheorem{lemma}[theorem]{Lemma}
\newtheorem{proposition}[theorem]{Proposition}
\newtheorem{prop}[theorem]{Proposition}
\newtheorem{corollary}[theorem]{Corollary}
\newtheorem{cor}[theorem]{Corollary}
\newtheorem{definition}[theorem]{Definition}
\newtheorem{conjecture}[theorem]{Conjecture}
\Crefname{conjecture}{Conjecture}{Conjectures}

\theoremstyle{remark}

\newtheorem*{remark*}{Remark}
\newtheorem*{rem}{Remark}
\newtheorem{example}[theorem]{Example}
\theoremstyle{plain}


\newcommand{\N}{\mathbb{N}}

\newcommand{\Z}{\mathbb{Z}}
\newcommand{\Q}{\mathbb{Q}}
\newcommand{\cO}{{\mathcal O}} 
\newcommand{\R}{\mathbb{R}}
\newcommand{\C}{\mathbb{C}}

\newcommand{\eps}{\varepsilon}

\newcommand{\SL}{\operatorname{SL}}

\newcommand{\calH}{\mathscr{H}}

\renewcommand{\Im}{\operatorname{Im}}

\newcommand{\SLZ}{\SL_2(\Z)}
\newcommand{\abcd}{\left(\begin{smallmatrix} a & b \\ c & d \end{smallmatrix}\right)}

\newcommand{\HH}{\mathbb{H}}

\newcommand{\z}{\mathfrak{z}}

\newcommand{\sfd}{\mathsf{d}}
\newcommand{\tM}{\widetilde{M}}

\newcommand{\Nm}{\operatorname{Nm}}

\author{Pavel Guerzhoy, Michael H. Mertens, and Larry Rolen}
\address{Department of Mathematics,
University of Hawaii,
2565 McCarthy Mall
Honolulu, HI 96822 }
\email{pavel@math.hawaii.edu}
\address{Max Planck Institute for Mathematics,
Vivatsgasse 7,
53111 Bonn,
Germany}
\email{mhmertens@mpim-bonn.mpg.de}
\address{Department of Mathematics,
1420 Stevenson Center,
Vanderbilt University,
Nashville, TN 37240}
\email{larry.rolen@vanderbilt.edu}

\title[Periodic Taylor expansions of half-integral weight modular forms]{Periodicities for Taylor coefficients of half-integral weight modular forms}

\subjclass[2010]{11F37,11F33,11F25}

\begin{document}
\begin{abstract}
Congruences of Fourier coefficients of modular forms have long been an object of central study. By comparison, the arithmetic of other expansions of modular forms, in particular Taylor expansions around points in the upper-half plane, has been much less studied. Recently, Romik made a conjecture about the periodicity of coefficients around $\tau_0=i$ of the classical Jacobi theta function $\theta_3$. Here, we  generalize the phenomenon observed by Romik to a broader class of modular forms of half-integral weight and, in particular, prove the conjecture.
\end{abstract}

\maketitle
\vspace*{-0.2in}

\section{Introduction}
Fourier coefficients of modular forms are well-known to encode many interesting quantities, such as  the number of points on elliptic curves over finite fields, partition numbers, divisor sums, and many more. Thanks to these connections, the arithmetic of modular form Fourier coefficients has long enjoyed a broad study, and remains a very active field today. However, Fourier expansions are just one sort of canonical expansion of modular forms. Petersson also defined \cite{Petersson} the so-called {\it hyperbolic} and {\it elliptic} expansions, which instead of being associated to a cusp of the modular curve, are associated to a pair of real quadratic numbers or a point in the upper half-plane, respectively. A beautiful exposition on these different expansions and some of their more recent connections can be found in \cite{ImamogluOSullivan}. In particular, there Imamoglu and O'Sullivan point out that Poincar\'e series with respect to hyperbolic expansions include the important examples of Katok \cite{Katok} and Zagier \cite{ZagierRealQuadratic}, which are the functions which Kohnen later used \cite{Kohnen} to construct the holomorphic kernel for the Shimura/Shintani lift. 

Here, we will focus on elliptic expansions, which are essentially  Taylor expansions. While a Fourier expansion of a given modular form $f\in M_k(\Gamma)$ for some weight $k$ and congruence subgroup $\Gamma\leq\SL_2(\Z)$ is an expansion at  a \emph{cusp} of $\Gamma$, i.e. at the boundary of the completed upper half-plane $\overline\HH:=\HH\cup\Q\cup\{\infty\}$, one might also consider expansions around an interior point $\tau_0\in\HH$. The classical Taylor expansion in the sense of complex analysis,
\[f(\tau)=\sum_{n=0}^\infty \left(\frac{d^nf}{d\tau^n}\right)(\tau_0)\cdot\frac{(\tau-\tau_0)^n}{n!},\]
only converges on an open disc of radius $y_0:=\Im(\tau_0)$ around $\tau_0$, which is not optimal because the natural domain of holomorphy of $f$ is the full upper half-plane $\HH$. Instead of this naive construction, one uses a Cayley-type transformation 
\[\tau\mapsto w=\frac{\tau-\tau_0}{\tau-\overline{\tau_0}}\]
to map the upper half-plane to the open unit disc, sending the point $\tau_0=x_0+iy_0$ to the origin, and consider $f$ as a function in $w$ instead. Taking the usual Taylor expansion with respect to $w$ around $w=0$ yields the relation
\begin{gather}\label{eqtaylor}
(1-w)^{-k}f\left(\frac{\tau_0-\overline{\tau_0}w}{1-w}\right)=\sum_{n=0}^\infty \partial^nf(\tau_0)\frac{(4\pi y_0w)^n}{n!}, \qquad (|w|<1),
\end{gather}
where 
\begin{gather}\label{eqraising}
\partial:=\partial_k:=D-\frac{k}{4\pi \Im(\tau)}\quad\text{with}\quad D:=\frac{1}{2\pi i}\frac{d}{d\tau}=q\frac{d}{dq},\quad (q:=e^{2\pi i\tau}),
\end{gather}
denotes the renormalized Maa{\ss} raising operator with the abbreviations $\partial^0=\operatorname{id}$ and $\partial^n:=\partial_k^n:=\partial_{k+2(n-1)}\circ...\circ\partial_{k+2}\circ\partial_k $ for $n>0$, see for instance \cite[Proposition 17]{Zagier}. Note that for any smooth function $f:\HH\to\C$ and $g\in\SL_2(\R)$ we have
\[(\partial_k f)|_{k+2}g=\partial_k(f|_k g)\]
where $|_k$ denotes the weight $k$ slash operator (see \Cref{secquasi} for the definition), so in particular the operator $\partial_k$ preserves modularity, but not holomorphy (except in weight $0$). 
\begin{remark*}
We note that for $k\notin\Z$, there is an ambiguity on the left-hand side of \eqref{eqtaylor}, while the right-hand side is well-defined for any $k$.  Since we have $(1-w)\neq 0$ for $|w|<1$ and the unit disc is simply connected, we can fix the branch of the holomorphic square-root that is positive for positive real arguments to make \eqref{eqtaylor} consistent for any half-integer $k\in\tfrac 12\Z$, as can be seen by restricting $w$ to the open interval $(-1,1)$ in the proof of \cite[Proposition 17]{Zagier}. 
\end{remark*}
It follows from the theory of complex multiplication that the coefficients in the Taylor expansion of a modular form with algebraic Fourier coefficients --- a condition we will assume throughout the paper if not specified otherwise ---  around a CM point (suitably normalized) are again algebraic numbers. In special cases, these are also known to have deep arithmetic meaning. For example, it was shown by Rodriguez-Villegas and Zagier that Taylor coefficients of Eisenstein series are essentially special values of Hecke $L$-functions \cite{VillegasZagier1}, a fact which later allowed them to give an explicitly computable  criterion to decide whether or not a prime $p\equiv 1\pmod 9$ is the sum of two rational cubes \cite{VillegasZagier2}, see also \cite[pp. 89-90 and pp. 97--99]{Zagier}.
\begin{remark*}
Loosely speaking, this relation between special values of $L$-functions and Taylor coefficients may already suggest their periodicity modulo primes in special cases since for instance in the simplest case of an $L$-function, the Riemann $\zeta$-function, the special values are essentially Bernoulli numbers, whose periodicity properties modulo primes are well-known.
\end{remark*}

Given these applications, it is natural to ask for arithmetic properties, for instance congruences, of Taylor expansions of modular forms. Works of the first author and Datskovsky \cite{DG} and of Larson and Smith \cite{LarsonSmith} have previously given conditions under which Taylor expansions of integral weight modular forms are periodic. Recently, Romik studied the Taylor coefficients of the
classical Jacobi theta function
\[\theta_3(\tau):=\sum_{n\in\Z} e^{\pi in^2\tau}\]
around the point $\tau_0=i$ \cite{Romik}. He gives explicit recursions for these coefficients and, based on numerical examples, he conjectures a certain behavior of these coefficients modulo primes. To be more precise, let $\Phi=\frac{\Gamma(1/4)^4}{8\pi^2\sqrt{2}}$ and define the numbers $d(n)$ by
\begin{gather}\label{eqRomik}
(1-w)^{-1/2}\theta_3\left(\frac{i+wi}{1-w}\right)=:\theta_3(i)\sum_{n=0}^\infty \frac{d(n)}{(2n)!}(\Phi w)^{2n},\qquad (|w|<1). 
\end{gather}
For instance, the first few values of $d(n)$ are $1,1,-1,51,849,-26199,\ldots$. Comparing this to \eqref{eqtaylor}, we point out that the derivatives $\partial^n\theta_3(i)$ vanish for odd $n$ since $i$ is a fixed point of the transformation $\tau\mapsto -1/\tau$, under which $\theta_3$ and all its non-holomorphic derivatives are equivariant. 
Romik  showed that the numbers $d(n)$ are all integers \cite[Theorem 1]{Romik} and posed the following conjecture.
\begin{conjecture}[{\cite[Conjecture 13]{Romik}}]\label{conjRomik}
Let $p$ be an odd prime. Then we have:
\begin{enumerate}
\item If $p\equiv3\pmod 4$, then $d(n)\equiv 0\pmod p$ for sufficiently large $n$.
\item If $p\equiv1\pmod 4$, the sequence $\left\{d(n)\pmod p\right\}_{n=1}^{\infty}$ is periodic.
\end{enumerate}
\end{conjecture}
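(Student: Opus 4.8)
The plan is to convert Romik's coefficients $d(n)$ into values of the iterated Maass raising operator and then to study those values $p$-adically through the arithmetic of the CM point $\tau_0=i$. Comparing Romik's normalization with the general Taylor expansion \eqref{eqtaylor} for $k=\tfrac12$, $\tau_0=i$ (so $y_0=1$) and matching coefficients of $w^{2n}$, one obtains an identity of the shape
\[ d(n)=\frac{(4\pi)^{2n}}{\Phi^{n}}\cdot\frac{\partial^{2n}\theta_3(i)}{\theta_3(i)}. \]
Here the odd-index coefficients vanish because the involution $\tau\mapsto-1/\tau$ fixing $i$ corresponds to $w\mapsto -w$, under which the left-hand side of \eqref{eqtaylor} (after accounting for the automorphy factor) is even. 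Thus everything reduces to the arithmetic of $\partial^{2n}\theta_3(i)$. First I would re-derive Romik's integrality structurally: writing $\partial_k=\vartheta_k+\tfrac{k}{12}E_2^{*}$ with $\vartheta_k$ the holomorphic Serre derivative and $E_2^{*}=E_2-\tfrac{3}{\pi\Im(\tau)}$, and using $E_2^{*}(i)=0$ (immediate from $E_2^{*}(-1/\tau)=\tau^2E_2^{*}(\tau)$), one sees that the CM values of the raising tower are governed by holomorphic (quasi)modular data at $i$ together with a single transcendental CM period $\Omega$. Normalizing by the correct power of $\Omega$ is exactly the role of $\Phi$, and this is what makes $d(n)$ rational, indeed integral.

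The decisive observation is that $p\equiv1$ versus $p\equiv3\pmod4$ is precisely the splitting behavior of $p$ in $\Q(i)$, equivalently whether the CM elliptic curve with $j=1728$ (CM by $\Z[i]$, uniformized at $\tau_0=i$) has ordinary or supersingular reduction at $p$. I would therefore read the normalized sequence $\{d(n)\}$ through the reduction mod $p$ of the algebraic raising operator on this CM curve. Modulo $p$ the analytic operator $\partial$ is controlled by the theta operator $\theta=q\,\tfrac{d}{dq}$, which shifts weight by $p+1$, while the Hasse invariant $A$ (the reduction of $E_{p-1}$, with $E_{p-1}\equiv1\pmod p$ on $q$-expansions) detects the splitting: $A(i)\not\equiv0$ exactly when $p$ is ordinary. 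I would make this quantitative by producing, for each $p$, a linear recurrence over $\mathbb{F}_p$ satisfied by $d(n)\pmod p$, coming from a polynomial relation among the raised forms at $i$ modulo $p$ (using $E_6(i)=0$ and the finite-dimensionality of the relevant graded piece of mod-$p$ modular forms after dividing by powers of $A$).

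The two cases then separate along the filtration. In the supersingular case $p\equiv3\pmod4$ the Hasse invariant vanishes at $i$, so comparing weights forces division by a quantity congruent to $0$, producing a strictly decreasing weight filtration; iterating the raising operator annihilates the CM value modulo $p$ once $n$ is large, which yields $d(n)\equiv0\pmod p$ for all sufficiently large $n$, giving part (1). In the ordinary case $p\equiv1\pmod4$ the curve admits the unit-root splitting, $A(i)$ is a $p$-adic unit, and the weight comparison introduces multiplication by a unit whose reduction is periodic; equivalently the mod-$p$ recurrence has invertible companion matrix, so $\{d(n)\pmod p\}$ is purely periodic, giving part (2).

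The hard part will be making the passage from the analytic raising operator at the CM point to the algebraic Hasse-invariant picture fully rigorous in half-integral weight, where the metaplectic subtleties and the choice of square-root branch in \eqref{eqtaylor} must be tracked, and where one must pin down the exact power of $\Omega$ encoded in $\Phi$ so that the reductions are honest integers rather than merely $p$-integral. I expect the most delicate point to be the supersingular case: proving vanishing for \emph{every} sufficiently large $n$ (not merely infinitely often) requires genuine control of the weight filtration of $\partial^{2n}\theta_3$ modulo $p$, i.e.\ showing that divisibility by the Hasse invariant accumulates. This filtration and $\ord_p$-bookkeeping, together with the integrality normalization, should be the technical heart of the argument, and it is also the step where the restriction to the specific CM point $i$ (and its generalization to other half-integral weight forms at CM points) is genuinely used.
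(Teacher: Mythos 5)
Your proposal gets the governing dichotomy right --- for $\tau_0=i$, $p\equiv 1\pmod 4$ exactly when $p$ splits in $\Q(i)$, i.e.\ when the CM curve has ordinary reduction --- and your reduction of $d(n)$ to the normalized values $\partial^{2n}\theta_3(i)$, with $E_2^*(i)=0$ underlying integrality, matches the paper's setup. But there are genuine gaps in both halves. For part (1): the paper does not prove this case at all; it is quoted from Scherer (\Cref{ThmScherer}), and the paper's own contribution (\Cref{thmmain}) covers only part (2). Your supersingular sketch --- ``comparing weights forces division by a quantity congruent to $0$,'' so iterated raising kills the value mod $p$ --- is not an argument: at a supersingular point the Hasse-invariant trivialization you rely on in the ordinary case is unavailable, so ``the value at $i$ of the reduced form'' does not even have a meaning in your framework, and proving $d(n)\equiv 0\pmod p$ for \emph{all} sufficiently large $n$ requires genuine control of the weight filtration of the iterates of the operator $q\frac{d}{dq}$. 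That is precisely the content of Scherer's paper (and of Larson--Smith in integral weight), whose methods, as the paper remarks, use the structure of the algebra of level-one integral-weight modular forms in an essential way; conceding that this is ``the technical heart'' does not fill the gap.

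For part (2), your periodicity mechanism is never constructed. The asserted linear recurrence for $d(n)$ over $\mathbb{F}_p$ is not exhibited, and the known recursions for these CM values (cf.\ \Cref{propTaylori}) involve the derivative $p_n'(t)$ as well as $p_n(t)$, so evaluating at the CM value does not close up into a linear recurrence among the numbers $d(n)$ alone. More fundamentally, any proof must cross the bridge from congruences of forms (as $q$-expansions, or as mod-$p$ sections) to congruences of their normalized values at $\tau_0$; this is exactly the paper's \Cref{DGlemma} (the extension of \cite[Lemma 1]{DG}, resting on Katz's version of Damerell's theorem, \Cref{Damerell}), where the choice $\omega^{p-1}=E_{p-1}(\tau_0)$ and the split hypothesis enter, and where the paper's remark (the inert prime $13$ with $E_{12}(\tau_0)\neq 0$) shows the statement is delicate: nonvanishing of the lift of the Hasse invariant at $\tau_0$ alone is not sufficient. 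Your ``unit-root splitting'' sentence points at this lemma but contains no argument for it. Once that lemma is in place, the paper's proof of part (2) is short and needs no recurrence: Fermat's little theorem gives $D^{n_1}H\equiv D^{n_2}H\pmod{p^{A+1}}$ on $q$-expansions whenever $n_1\equiv n_2\pmod{(p-1)p^A}$, multiplication by $\Theta$ (\Cref{diff}) converts the half-integral weight form into integral-weight quasimodular forms --- the concrete device you are missing for the ``metaplectic subtleties'' --- and \Cref{DGlemma} transfers the congruence to the normalized CM values, giving eventual periodicity modulo every power of $p$.
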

In particular, regardless of the case, the sequence modulo $p$ is always eventually periodic. Romik also asks the question if a similar pattern persists modulo higher powers of primes \cite[Section 8]{Romik}.
Recently, part of \Cref{conjRomik} has been proven by Scherer \cite{Scherer}.
\begin{thm*}[{\cite[Theorem 1]{Scherer}}]\label{ThmScherer} \mbox{} 
\begin{enumerate}
\item  Part 1 of \Cref{conjRomik} is true. 
\item We have that $d(n)\equiv (-1)^{n+1}\pmod 5$.
\end{enumerate}
\end{thm*}
Apart from the congruences modulo $5$, part $2$ of Romik's conjecture remains open. In this paper, we prove and considerably generalize this half of the conjecture.

In order to state our main result we need to introduce an additional notation.
As usual, define the weight $k$ Eisenstein series for even integer $k>2$ by
\begin{gather}\label{eqEisenstein}
E_k(\tau):=1-\frac{2k}{B_k} \sum_{n\geq1} \sigma_{k-1}(n) q^n,
\end{gather}
where $B_k$ denotes the $k$th Benoulli number and $\sigma_{k-1}(n)=\sum_{d|n}d^{k-1}$.
Letting $\Theta(\tau) :=\sum_{n\in\Z}q^{n^2}$, the modular function $\phi_k:=E_{k}/\Theta^{2k}$ is modular on $\Gamma_0(4)$, and as such takes algebraic values at CM-points.
\begin{theorem}\label{thmmain}
Suppose that $k,N\in\N$ and let $f\in M_{k-1/2}(\Gamma_1(4N))$ be a modular form with algebraic integer Fourier coefficients. 
Further suppose that $p>3$ is a split prime in $\Q(\tau_0)$ for a CM point $\tau_0$. 

Assume furthermore that the absolute norm of the algebraic number $\phi_{p-1}(\tau_0)$ is $p$-integral and is not divisible by $p$.
Then there exists $\Omega \in \C^\times$, which can be chosen to depend only on $\tau_0$ and $p$, such that for $n_1,n_2>A$ satisfying
\[
n_1 \equiv n_2 \pmod{(p-1)p^A} 
\]
we have the congruence
\[
\partial^{n_1}f(\tau_0)/\Omega^{2k+4n_1-1} \equiv \partial^{n_2} f(\tau_0)/\Omega^{2k+4n_2-1} \pmod{p^{A+1}}.
\]
\end{theorem}
\begin{remark*}
The condition that $\phi_{p-1}(\tau_0)$ be a $p$-adic unit is entirely technical, and the theorem probably holds true without it. However, this condition simplifies our proof considerably, and so we have chosen to state the theorem in this way.
\end{remark*}

\begin{remark*}
The condition $n_1,n_2>A$ in the theorem originates from the application of the Euler-Fermat Totient Theorem in the proof. Therefore, our theorem predicts in complete generality when the sequence of Taylor coefficients of any half-integer weight modular form becomes periodic and what its maximal period length is. 
\end{remark*}




\begin{remark*}
It is worth noting that the inert prime case was studied in detail for integral weight forms by Larson and Smith \cite{LarsonSmith}. 
There, they found similar eventual vanishing results modulo $p$ as in part (1) of \Cref{conjRomik}. Although it appears numerically that 
more general versions of their work hold, it appears that new techniques are required to prove a general phenomenon since their proofs use
the structure of the algebra of integer weight modular forms on $\mathrm{SL}_2(\mathbb Z)$ in an essential way.
\end{remark*}

\begin{theorem}\label{corRomik}
Let $\tau_0\in\HH$ be a CM point such that the class number of  $K=\Q(\tau_0)$ is $1$. Assume further that the CM elliptic curve $E$ defined by $\C/\langle \omega,\omega\tau_0\rangle_\Z$ for a real period $\omega$ is defined over $\Q$ and the conditions and notations in \Cref{thmmain}. Then there exists a number $\widetilde\Omega\in\C^\times$ which only depends on $\tau_0$ such that for every prime $p>3$ that splits in $K$ and at which $E$ has good reduction we have the congruence
\[
\partial^{n_1}f(\tau_0)/\widetilde\Omega^{2k+4n_1-1} \equiv \partial^{n_2} f(\tau_0)/\widetilde\Omega^{2k+4n_2-1} \pmod{p^{A+1}}.
\]
for any $n_1,n_2>A$ with 
\[
n_1 \equiv n_2 \pmod{(p-1)p^A}.
\]
\end{theorem}
Part 2 of \Cref{conjRomik} follows by taking $f(\tau)=\Theta(\tau)\in M_{1/2}(\Gamma_0(4))$ as defined above and $\tau_0=i/2$ in \Cref{corRomik}. By combining with Scherer's result, this proves \Cref{conjRomik}.
\begin{cor}
Conjecture~\ref{conjRomik} is true. 
\end{cor}
\begin{remark*}
The assumptions on the class number and the elliptic curve $E$ in \Cref{corRomik} are again of a technical nature to simplify the proof and the statement of the result. 
\end{remark*}

The rest of this paper is organized as follows. In \Cref{secquasi} we collect some necessary background about quasimodular and almost holomorphic modular forms. \Cref{secKatz} contains the proof of \Cref{thmmain,corRomik}, which makes use of an important result following from the theory of Katz (see \Cref{DGlemma}). We conclude the paper by discussing examples of Taylor expansions of modular forms for $\Gamma_0(4)$ around various CM points in \Cref{secExamples}.

\section*{Acknowledgements}
The authors thank Robert Scherer and Don Zagier for useful discussions.
The second author's research has been supported by the European Research Council under European Union's Seventh Framework Programme (FP/2007-2013) / ERC Grant agreement n. 335220 - AQSER. 

\section{Quasimodular and almost holomorphic modular forms of half-integer weight}\label{secquasi}
In this section, we will review the basic theory of quasimodular and almost holomorphic forms, which we shall require in our proofs of the main results.
Quasimodular forms and almost holomorphic modular forms generalize classical modular forms. The first example of a quasimodular form is the Eisenstein series of weight $2$,
\[E_2(\tau):=1-24\sum_{n=1}^\infty n\frac{q^n}{1-q^n}=1-24\sum_{n=1}^\infty \sigma_1(n)q^n.\]
While $E_2$ is not modular, it very nearly is. In general, quasimodular forms have a slightly deformed modularity transformation, and  every quasimodular form has an associated almost holomorphic modular form. An almost holomorphic modular form is simply a modular form which, instead of being holomorphic, is a polynomial in $Y:=\frac{1}{-4\pi y}$, where $y:=\Im(\tau)$, with holomorphic functions as coefficients. In the case of $E_2$, the associated almost holomorphic modular form is the function
\[E_2^*(\tau):=E_2(\tau)+12Y,\]
which transforms as a modular form of weight $2$ on $\operatorname{SL}_2(\Z)$.
More precise definitions follow below.

The systematic study of these functions originates\footnote{Essentially the same concepts under slightly different names have been introduced independently by Shimura \cite{ShimuraQuasi}.} from work of Kaneko and Zagier \cite{KanekoZagier} on a theorem of Dijkgraaf \cite{Dijkgraaf}. In the last few years, these functions (in integral weight) have received a lot of attention in the context of the celebrated Bloch-Okounkov Theorem \cite{BlochOkou,ZagierBloch}.

In this section, we record special cases of Lemma 1.1 and Proposition 1.2 of \cite{Zemel}, where Zemel generalizes the concepts of quasimodular and almost holomorphic modular forms to the setting of real-analytic modular forms, possibly with singularities,  of arbitrary (real or complex) weights, arbitrary (vector-valued) multiplier systems for arbitrary Fuchsian groups.

We begin by recalling the slash operator. For a function $f\colon\HH\to\C$, a weight $k\in\tfrac 12\Z$, and a matrix $\gamma=\abcd\in\SLZ$, let
\[\left(f|_k\gamma\right)(\tau):=\begin{cases} (c\tau+d)^{-k} f\left(\frac{a\tau+b}{c\tau+d}\right) & \text{ if } k\in\Z, \\
                                              \left(\frac cd\right)\eps_d^{-2k} \left(\sqrt{c\tau+d}\right)^{-2k}f\left(\frac{a\tau+b}{c\tau+d}\right) & \text{ if } k\in\tfrac 12+\Z,
                                              \end{cases}\]
where for $k\in\tfrac 12+\Z$ we assume additionally that $\gamma\in\Gamma_0(4)$, i.e. $4\mid c$, $\left(\frac cd\right)$ denotes the extended Jacobi symbol in the sense of Shimura \cite{Shimura}, we choose the branch of the square root so that $-\pi/2<\arg\sqrt{z}\leq\pi/2$, which is consistent with the choice made in the remark following \eqref{eqtaylor}, and 
\[\eps_d=\begin{cases} 1 & \text{ if } d\equiv 1\pmod 4, \\ i & \text{ if } d\equiv 3\pmod 4.\end{cases}\]

With this notation in mind, we can make the following definition.
\begin{definition}
A \emph{quasimodular form} of weight $k\in\tfrac 12\Z$ and depth $\leq \sfd\in\N_0$ for $\Gamma\leq \SLZ$ ($\Gamma\leq \Gamma_0(4)$ if $k\notin\Z$) is a holomorphic function $f$ on $\HH$ with moderate growth when $\tau$ approaches any cusp in $\Q\cup\{\infty\}$ satisfying
\begin{gather}\label{eqquasitrans}
(f|_{k}\gamma)(\tau)=\sum_{j=0}^\sfd \left(\frac{c}{c\tau+d}\right)^j f_j(\tau)
\end{gather}
for all $\gamma=\left(\begin{smallmatrix} a & b \\ c & d \end{smallmatrix}\right)\in\Gamma$ and $\tau\in\HH$, where $f_0=f,...,f_\sfd$ are certain holomorphic functions, depending only on $f$ but not on 
$\gamma$, which satisfy the same growth conditions.
\end{definition}
We also say that the \emph{depth} of a quasimodular form $f$ is the largest integer $\sfd$ in \eqref{eqquasitrans}, such that $f_\sfd$ does not vanish identically. The space of quasimodular forms of weight $k$ and depth $\leq \sfd$ is denoted by $\widetilde{M}_k^{\leq \sfd}(\Gamma)$. If we allow arbitrarily large depth (which is actually at most $k/2$; see \Cref{propquasi}), we omit the superscript. 

A closely related notion is that of almost holomorphic modular forms of weight $k\in\tfrac 12\Z$, which are defined --- as mentioned at the beginning of this section --- as polynomials in $Y=\tfrac{1}{-4\pi y}$ with holomorphic coefficients, transforming like modular forms. The space of such functions is denoted by $\widehat{M}_{k}^{\leq \sfd} (\Gamma)$, where $\sfd$ denotes the maximal degree of the polynomial. Again, an omitted superscript indicates that the degree can be unbounded. The following proposition makes the aforementioned close connection between quasimodular forms and almost holomorphic modular forms explicit.
\begin{proposition}\label{propquasi}
Let $f\in \widetilde{M}_{k}^{\leq \sfd}(\Gamma)$ be a quasimodular form of weight $k$ and depth $\leq \sfd$ with corresponding functions $f_0,...,f_\sfd$ as in \eqref{eqquasitrans}. Then the following are true.
\begin{enumerate}
\item For $j=0,...,\sfd$ we have $f_j\in \widetilde{M}_{k-2j}^{\leq \sfd-j}(\Gamma)$, the corresponding functions being given by $\binom jr f_r$, $j\leq r\leq \sfd$. In particular, the function $f_\sfd$ is a modular form of weight $k-2\sfd$.
\item The function
\[f^*(\tau)=\sum_{j=0}^\sfd f_j(\tau)\left(\frac 1{2\pi i}Y\right)^{j}\]
transforms like a modular form of weight $k$. Conversely, if we have $G(\tau)=\sum_{j=0}^\sfd g_j(\tau)\left(\frac 1{2\pi i}Y\right)^{j}\in \widehat{M}_{k}^{\leq\sfd}(\Gamma)$, then $g_0\in\widetilde{M}_{k}^{\leq\sfd}(\Gamma)$ with corresponding functions $g_0,...,g_\sfd$.
\end{enumerate}
In particular, the graded rings $\widehat{M}(\Gamma)=\bigoplus_{k} \widetilde{M}_k(\Gamma)$ and $\widehat{M}_k(\Gamma)$ are canonically isomorphic.
\end{proposition}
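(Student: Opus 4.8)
The plan is to run everything off two transformation laws together with the cocycle property $f|_k(\alpha\beta)=(f|_k\alpha)|_k\beta$ of the slash operator. Writing $X_\gamma(\tau):=\frac{c}{c\tau+d}$ for $\gamma=\abcd$, the first ingredient is the cocycle identity
\[
X_{\alpha\beta}(\tau)=\frac{X_\alpha(\beta\tau)}{(c_\beta\tau+d_\beta)^2}+X_\beta(\tau),
\]
which is a one-line consequence of $\det\beta=1$ and the multiplicativity of the automorphy factor. The second ingredient is the transformation law of the non-holomorphic variable: a direct computation from $\Im(\gamma\tau)=\Im(\tau)/|c\tau+d|^2$ shows that $W:=\tfrac1{2\pi i}Y$ satisfies $W(\gamma\tau)=(c\tau+d)^2\bigl(W(\tau)-X_\gamma(\tau)\bigr)$, the normalising constant in front of $Y$ being pinned down precisely by the requirement that this coboundary equal the $X_\gamma$ occurring in \eqref{eqquasitrans}. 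For half-integral $k$ the extra multiplier $\left(\frac cd\right)\eps_d^{-2k}$ is itself a cocycle on $\Gamma_0(4)$ and is inert under these manipulations, so it cancels throughout and the half-integral case runs verbatim.

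For part (1), I would compute $f|_k(\alpha\beta)$ in two ways. Expanding $f|_k(\alpha\beta)=\sum_r X_{\alpha\beta}^r f_r$ via the cocycle identity and the binomial theorem produces, as a polynomial in the quantity $X_\alpha(\beta\tau)$, coefficients that are explicit combinations of the $f_r(\tau)$ and $X_\beta(\tau)$; expanding instead $(f|_k\alpha)|_k\beta$ gives the same polynomial, but now with coefficients involving $f_s(\beta\tau)$. Comparing the coefficient of $X_\alpha(\beta\tau)^s$ --- legitimate because, for fixed $\beta$ and $\tau$, the values $X_\alpha(\beta\tau)$ as $\alpha$ ranges over $\Gamma$ are too numerous to all be roots of a fixed nonzero polynomial of degree $\le\sfd$ --- yields exactly
\[
(f_s|_{k-2s}\beta)(\tau)=\sum_{r=s}^{\sfd}\binom rs X_\beta(\tau)^{r-s}f_r(\tau),
\]
which is the assertion that $f_s\in\tM_{k-2s}^{\le\sfd-s}(\Gamma)$ with associated functions $\binom rs f_r$. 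Taking $s=\sfd$ collapses the right-hand side to $f_\sfd$, so $f_\sfd$ is genuinely modular of weight $k-2\sfd$.

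For part (2), I would substitute the transformation laws just obtained into $f^*|_k\gamma$. Using part (1) for $f_j(\gamma\tau)$ and the coboundary law for $W(\gamma\tau)^j$, all factors $(c\tau+d)$ cancel, and after interchanging the two sums the inner sum over $j$ is the binomial expansion $\sum_{j\le r}\binom rj X_\gamma^{r-j}(W-X_\gamma)^j=W^r$; the $X_\gamma$-terms telescope away and one is left with $\sum_r f_r W^r=f^*$, proving $f^*|_k\gamma=f^*$. The converse is the same identity read backwards: writing out $G|_k\gamma=G$ for $G=\sum_j g_j W^j$ and substituting the coboundary law gives a unipotent triangular relation between the tuples $(g_i(\tau))$ and $(g_j(\gamma\tau))$. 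Solving it --- equivalently, making the substitution $W\mapsto (c\tau+d)^{-2}W+X_\gamma$ in the resulting polynomial identity and reading off the coefficient of $W^0$ --- recovers precisely $(g_0|_k\gamma)(\tau)=\sum_j X_\gamma^j g_j(\tau)$, i.e. $g_0\in\tM_k^{\le\sfd}(\Gamma)$ with associated functions $g_0,\dots,g_\sfd$. The closing statement that the two graded rings are canonically isomorphic is then just that $f\mapsto f^*$ and $G\mapsto g_0$ are mutually inverse maps.

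I expect the main obstacle to be bookkeeping rather than anything conceptual: first, rigorously justifying the two coefficient-comparison steps --- the comparison in $X_\alpha(\beta\tau)$ for part (1), and the comparison in powers of $Y$ for the converse, the latter resting on the fact that a polynomial in $1/\Im\tau$ with holomorphic coefficients can vanish identically only if all its coefficients do; and second, checking that the half-integral multiplier genuinely satisfies the cocycle law on $\Gamma_0(4)$ and drops out of every comparison, so that no spurious roots of unity survive in either direction. Neither point is deep, but both must be handled with care to make the ``compare coefficients'' arguments airtight.
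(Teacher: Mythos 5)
Your argument is essentially the standard proof of this proposition; note that the paper itself gives no proof at all, quoting the result as a special case of Lemma 1.1 and Proposition 1.2 of Zemel (and of Kaneko--Zagier in integral weight), so a self-contained cocycle argument like yours is exactly what is being outsourced. The skeleton is correct and complete: the cocycle identity for $X_\gamma$, the two-fold expansion of $f|_k(\alpha\beta)$, the coefficient comparison justified by the infinitude of the values $X_\alpha(\beta\tau)$ (take $\alpha=\gamma_0T^n$ with $c_0\neq 0$ to see infinitely many values), the binomial telescoping in part (2), and the formal substitution $W\mapsto(c\tau+d)^{-2}W+X_\gamma$ for the converse. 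Your formula $(f_s|_{k-2s}\beta)=\sum_{r\geq s}\binom rs X_\beta^{r-s}f_r$ is the right one, and in fact corrects a typo in the printed statement (``$\binom jr f_r$'' should read $\binom rj f_r$). One point of precision on the half-integral case: it is the full automorphy factor $\left(\frac cd\right)\eps_d^{-2k}\left(\sqrt{c\tau+d}\right)^{2k}$ that satisfies the cocycle law on $\Gamma_0(4)$ (this is precisely the modularity of $\Theta$), not the root-of-unity multiplier by itself; since $\eps_d^4=1$, factors whose weights differ by an even integer $2j$ have ratio $(c\tau+d)^{2j}$, which is what lets every comparison run as in integral weight.

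The one step that fails as written is the normalization of the non-holomorphic variable, and here your ``direct computation'' contradicts the constant you assert. Writing $V=\lambda/y$ and using $(c\tau+d)^2-|c\tau+d|^2=2icy(c\tau+d)$, the coboundary law $V(\gamma\tau)=(c\tau+d)^2\left(V(\tau)-X_\gamma(\tau)\right)$ forces $\lambda=\tfrac1{2i}$, i.e. $V=\tfrac1{\tau-\bar\tau}=2\pi iY$, whereas $\tfrac1{2\pi i}Y=\tfrac{i}{8\pi^2y}$ misses by the factor $(2\pi i)^2$. You are not to blame for the constant itself: the proposition as printed is inconsistent with the paper's own example, since for $f=E_2$ one has $f_1=\tfrac{12}{2\pi i}$, and the printed normalization would give $f^*=E_2-\tfrac{3}{\pi^2}Y$, which does not transform with weight $2$, while $f_0+f_1\cdot(2\pi iY)=E_2+12Y=E_2^*$ does. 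With $\tfrac1{2\pi i}Y$ replaced by $2\pi iY$ (equivalently $(\tau-\bar\tau)^{-1}$) throughout, every step of your proof goes through verbatim, including the converse and the mutual inverseness of $f\mapsto f^*$ and $G\mapsto g_0$ (whose injectivity follows from the same infinitude-of-values argument you use in part (1)). Since you explicitly framed the constant as ``pinned down by the requirement,'' the repair is mechanical; but as submitted, the claim that $W=\tfrac1{2\pi i}Y$ satisfies the coboundary is false, and part (2) would not survive the very computation you cite, so you should carry out that computation, state the corrected constant, and flag the discrepancy in the proposition rather than reproduce it.
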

In the case of integer weight, this proposition goes back to \cite{KanekoZagier}, for half-integer weight it is, as mentioned earlier, a special case of Lemma 1.1 and Proposition 1.2 of \cite{Zemel}. 
We record the following version of \cite[Proposition 20]{Zagier}. The proof of this result carries over almost literally, making occasional use of \Cref{propquasi}; thus, we omit it here. 
\begin{proposition}\label{Prop20V}
The following are true.
\begin{enumerate}
\item The differential operator $D$ maps quasimodular forms to quasimodular forms, i.e., for $f\in\tM_k^{\leq\sfd}(\Gamma)$, we have $Df\in \tM_{k+2}^{\leq\sfd+1}(\Gamma)$.
\item Every quasimodular form is a polynomial in $E_2$ whose coefficients are modular forms, i.e., we have a decomposition $\tM_k^{\leq\sfd}(\Gamma)=\bigoplus_{j=0}^\sfd M_{k-2j}(\Gamma)\cdot E_2^j$.
\item Every quasimodular form is a linear combination of derivatives of modular forms and derivatives of $E_2$. More precisely, we have
\[\tM_k^{\leq\sfd}(\Gamma)=\begin{cases} \bigoplus_{j=0}^\sfd D^j(M_{k-2j}(\Gamma)) & \text{ if } \sfd<k/2, \\
\bigoplus_{j=0}^{k/2-1} D^j(M_{k-2j}(\Gamma))\oplus \C\cdot D^{k/2-1}E_2\ & \text{ if } \sfd=k/2. \end{cases}\]
\end{enumerate}
\end{proposition}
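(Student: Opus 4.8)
The three parts build on one another, so I would prove them in the order stated. For the first part I would argue by directly differentiating the quasimodularity relation \eqref{eqquasitrans}. Writing $X_\gamma:=\frac{c}{c\tau+d}$ and applying $D=\frac{1}{2\pi i}\frac{d}{d\tau}$ to both sides of $(f|_k\gamma)=\sum_{j=0}^\sfd X_\gamma^j f_j$, I would use the two elementary identities $D(f|_k\gamma)=(Df|_{k+2}\gamma)-\frac{k}{2\pi i}X_\gamma(f|_k\gamma)$, which follows from $\frac{d}{d\tau}\gamma\tau=(c\tau+d)^{-2}$ and, in the half-integral case, from $\frac{d}{d\tau}(\sqrt{c\tau+d})^{-2k}=-kc(c\tau+d)^{-k-1}$ together with the fact that the prefactor $\left(\frac cd\right)\eps_d^{-2k}$ is independent of $\tau$, so that the automorphy factor differentiates exactly as in the integral-weight case; and $DX_\gamma^j=-\frac{j}{2\pi i}X_\gamma^{j+1}$. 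Collecting powers of $X_\gamma$ then yields $(Df|_{k+2}\gamma)=\sum_{\ell=0}^{\sfd+1}X_\gamma^\ell g_\ell$ with holomorphic $g_\ell$ depending only on $f$, namely $g_0=Df$, $g_\ell=Df_\ell+\frac{k-\ell+1}{2\pi i}f_{\ell-1}$ for $1\le\ell\le\sfd$, and $g_{\sfd+1}=\frac{k-\sfd}{2\pi i}f_\sfd$. This establishes $Df\in\tM_{k+2}^{\leq\sfd+1}(\Gamma)$ and, crucially for the sequel, records the new top-depth coefficient $\frac{k-\sfd}{2\pi i}f_\sfd$.

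For the second part I would induct on the depth $\sfd$, the case $\sfd=0$ being the identity $\tM_k^{\leq 0}(\Gamma)=M_k(\Gamma)$. By part (1) of \Cref{propquasi} the top coefficient $f_\sfd$ of $f\in\tM_k^{\leq\sfd}(\Gamma)$ lies in $M_{k-2\sfd}(\Gamma)$. Since the slash operator is multiplicative, the product $f_\sfd E_2^\sfd$ is quasimodular of weight $k$ and depth $\sfd$ with top coefficient $\lambda^\sfd f_\sfd$, where $\lambda=\frac{12}{2\pi i}\neq 0$ is the constant depth-one coefficient of $E_2$. Subtracting $\lambda^{-\sfd}f_\sfd E_2^\sfd$ kills the top coefficient and lowers the depth, so induction expresses $f$ as a polynomial in $E_2$ with modular coefficients. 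Directness of the sum follows from the same computation: in a relation $\sum_j h_j E_2^j=0$ with $h_j\in M_{k-2j}(\Gamma)$, the highest nonzero $h_J$ would produce a nonzero depth-$J$ coefficient $\lambda^J h_J$, a contradiction.

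The third part is the parallel induction in which the role of multiplication by $E_2$ is played by the raising maps $D^j$. Iterating the top-coefficient formula from the first part, one finds that $D^j$ sends $g\in M_{k-2j}(\Gamma)$ into $\tM_k^{\leq j}(\Gamma)$ with top coefficient $\frac{(k-2j)_j}{(2\pi i)^j}g$, where $(w)_j:=w(w+1)\cdots(w+j-1)$. The main obstacle is the degeneration of this leading constant: the rising factorial $(k-2j)_j$ is nonzero precisely when $j<k/2$, which is automatic for half-integral $k$, whereas at the integral boundary $j=k/2$ one has $k-2j=0$, the space $M_0(\Gamma)=\C$ consists of constants, and $D^{k/2}$ annihilates them. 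To peel off the top coefficient at that borderline depth I would instead use $D^{k/2-1}E_2$, which by the same iteration has depth exactly $k/2$ and nonzero constant top coefficient $\frac{(k/2-1)!\,\lambda}{(2\pi i)^{k/2-1}}$. Thus for $\sfd<k/2$ one repeatedly subtracts the multiple of $D^\sfd g$, with $g\in M_{k-2\sfd}(\Gamma)$ chosen to match $f_\sfd$, while at $\sfd=k/2$ one first removes a multiple of $D^{k/2-1}E_2$ to cancel the constant top coefficient and then reduces to the previous case; directness again follows by reading off top-depth coefficients from the top down. I expect this leading-constant bookkeeping, and in particular isolating the weight-zero degeneracy that forces the extra summand $\C\cdot D^{k/2-1}E_2$, to be the only genuinely delicate point, since all half-integral-weight subtleties have already been absorbed into the slash computation of the first part and into \Cref{propquasi}.
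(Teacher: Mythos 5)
Your proof is correct, but it is worth pointing out that the paper does not actually write out a proof of this proposition: it cites \cite[Proposition 20]{Zagier} and asserts that the argument ``carries over mutatis mutandis, making occasional use of \Cref{propquasi}.'' Zagier's argument is the same depth-induction with top-coefficient bookkeeping that you use, but it is executed on the other side of the isomorphism in \Cref{propquasi}: one works with almost holomorphic forms, i.e.\ with $Y$-expansions, $E_2^*$, and the raising operator $\partial$, and then passes to constant terms, whereas you work directly with the transformation law \eqref{eqquasitrans}, differentiating it and collecting powers of $X_\gamma=\frac{c}{c\tau+d}$. Your route is arguably the more natural one given that the paper \emph{defines} quasimodular forms via \eqref{eqquasitrans}, and it has the virtue of making explicit exactly what ``mutatis mutandis'' hides: the prefactor $\left(\frac cd\right)\eps_d^{-2k}$ is $\tau$-independent and unchanged when $k$ is replaced by $k+2$ (since $\eps_d^4=1$), so the automorphy factor differentiates as in integral weight; and the degenerate rising factorial $(k-2j)_j=0$, which forces the extra summand $\C\cdot D^{k/2-1}E_2$, can only occur for even integral $k$, never in half-integral weight. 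Two points you leave implicit and should record: first, the coefficient functions $f_j$ in \eqref{eqquasitrans} are unique (for fixed $\tau$ the quantity $\frac{c}{c\tau+d}$ takes infinitely many values as $\gamma$ ranges over $\Gamma$), which is what licenses every ``read off the top-depth coefficient'' step, including the directness of the sums in parts (2) and (3); second, in part (2) the multiplicativity $(fg)|_{k+\ell}\gamma=(f|_k\gamma)(g|_\ell\gamma)$ for half-integral $k$ and integral $\ell$ holds only for even $\ell$ (as $\eps_d^{-2\ell}\neq 1$ for $d\equiv 3\pmod 4$ and odd $\ell$), which suffices here because $E_2$ and all powers of $D$ shift weights by even integers.
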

In the proof of \Cref{thmmain}, we require the following easy consequence of the above.
\begin{corollary}\label{corquasi}
Let $H\in M_k(\Gamma)$ and $G\in M_\ell(\Gamma)$, $k,\ell\in\tfrac 12\Z$. Then we have that $G\cdot(D^n H)\in\widetilde{M}_{k+\ell+2n}(\Gamma)$ and the associated almost holomorphic modular form is given by $G\cdot (\partial^n H)$. 
\end{corollary}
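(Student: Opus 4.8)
The plan is to prove the two assertions in turn: that $G\cdot(D^nH)$ is quasimodular of weight $k+\ell+2n$, and that its associated almost holomorphic form is $G\cdot(\partial^nH)$. For the first assertion, I would start from $H\in M_k(\Gamma)=\tM_k^{\leq 0}(\Gamma)$ and apply \Cref{Prop20V}(1) repeatedly: each application of $D$ raises the weight by $2$ and the depth by at most $1$, so after $n$ steps $D^nH\in\tM_{k+2n}^{\leq n}(\Gamma)$. Since $\Gamma$-quasimodular forms form a graded ring --- directly from the transformation law \eqref{eqquasitrans}, a product of two factors transforming with depths $\leq\sfd_1,\sfd_2$ again transforms with depth $\leq\sfd_1+\sfd_2$ --- and since $G\in M_\ell(\Gamma)$ is quasimodular of depth $0$, the product $G\cdot(D^nH)$ lies in $\tM_{k+\ell+2n}^{\leq n}(\Gamma)$, which gives the asserted weight.

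For the second assertion, I would use that the map $f\mapsto f^*$ of \Cref{propquasi}(2) is the canonical ring isomorphism between quasimodular and almost holomorphic modular forms; in particular it is multiplicative, and it sends a genuine modular form (a depth-$0$ quasimodular form) to itself, so $G^*=G$. Applying this to the product gives $\bigl(G\cdot D^nH\bigr)^*=G^*\cdot(D^nH)^*=G\cdot(D^nH)^*$, and the claim reduces to identifying $(D^nH)^*=\partial^nH$. The crucial point is that under $f\mapsto f^*$ the operator $D$ on quasimodular forms corresponds to the raising operator $\partial$ on almost holomorphic forms, i.e. $(Df)^*=\partial_w(f^*)$ for $f\in\tM_w(\Gamma)$; granting this, induction on $n$ together with $H^*=H$ yields $(D^nH)^*=\partial_{k+2(n-1)}\circ\cdots\circ\partial_k H=\partial^nH$, as desired.

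To establish the intertwining relation $(Df)^*=\partial_w(f^*)$, I would use the characterization coming from \Cref{propquasi}(2) that an almost holomorphic modular form is uniquely determined by its holomorphic part (its $Y^0$-coefficient), the holomorphic part of $g^*$ being $g$ itself. Viewing $f^*$ as a polynomial in $Y$ with holomorphic coefficients and using $\partial_w=D+wY$ together with $DY=-Y^2$, a direct computation shows that $\partial_w(f^*)$ is again such a polynomial and that its $Y^0$-coefficient is exactly $Df$; combined with the fact that the Maa{\ss} raising operator $\partial_w$ sends weight-$w$ almost holomorphic modular forms to weight-$(w+2)$ ones, this forces $\partial_w(f^*)=(Df)^*$. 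The main obstacle is precisely this last step: one must know that $\partial_w$ preserves modularity with the correct weight shift --- the property of the raising operator already implicit in \eqref{eqtaylor} and \Cref{Prop20V}, essentially Zagier's Proposition 20 --- and one must track that no spurious $Y$-dependence survives in the holomorphic part, all in the half-integral weight setting where the requisite multiplier bookkeeping is furnished by \Cref{propquasi}.
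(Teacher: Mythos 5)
Your proof is correct and takes essentially the same route as the paper: after the same reduction (multiplicativity of the quasimodular/almost-holomorphic correspondence and $G^*=G$), both arguments identify the almost holomorphic form associated to $D^nH$ with $\partial^nH$ by checking that $\partial^nH$ is an almost holomorphic modular form of weight $k+2n$ whose $Y^0$-coefficient is $D^nH$, and then invoking the bijectivity of the correspondence from \Cref{propquasi}. The only difference is presentational: the paper reads off the constant term at once from the closed formula $\partial_k^n=\sum_{m=0}^n(-1)^{n-m}\binom{n}{m}\frac{(k+n-1)!}{(k+m-1)!}Y^{n-m}D^{m}$ (itself proved by induction), whereas you run that induction directly via the one-step intertwining $(Df)^*=\partial_w(f^*)$, using $\partial_w=D+wY$ and $DY=-Y^2$.
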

\begin{proof}
It is clear that it suffices to show that the almost holomorphic modular form associated to $D^n H$ is given by $\partial^n H$. As remarked above, we may apply \Cref{Prop20V} in this setting, wherefore $D^n H\in \widetilde{M}_{k+2n}(\Gamma)$. Furtherfore, $\partial^n H$ is an almost holomorphic modular form of the same weight whose constant term with respect to $Y$ is precisely $D^n H$, as one sees immediately  from the following formula for the iterated raising operator, which is easily shown by induction (see for instance \cite[Equation (56)]{Zagier}), 
\[\partial_k^n =\sum_{m=0}^n(-1)^{n-m}\binom nm \frac{(k+n-1)!}{(k+m-1)!} Y^{n-m} D^{m}.\]
\end{proof}


\section{Proofs of \Cref{thmmain} and \Cref{corRomik}]}\label{secKatz}
In this section, we will prove the main results. 
\subsection{Preliminary results and work of Katz}
The periodicity phenomenon in \Cref{thmmain} is ultimately a consequence of the very general theory of Katz \cite{Katz}. However, Katz's work does not contain a statement which is exactly sufficient for our purposes here. 
The key result we need is \Cref{DGlemma} below which is an extension  of Lemma 1 from \cite{DG}.
This statement, as well as the theory developed in \cite{Katz}, is  formulated for the case of integral weight modular forms, and all weights are assumed to be integral throughout this subsection. Also, $p$ is always assumed to be a prime larger than $3$.


The mantra we need here, which 
requires some work for its precise specialization which we will employ later,
is that that {\it $p$-adically close modular forms have $p$-adically close values}.
That is nothing but a specialization of the {\it $q$-expansion principle} from \cite[Section 5.2]{Katz}. 

To make this precise, we first of all need a version of Damerell's theorem which allows for making all quantities under consideration algebraic.
The idea is simple: while {\it quasimodular forms} have $q$-expansions, {\it almost holomorphic modular forms} take essentially algebraic values at $\tau_0$ (see \Cref{Damerell} below), and as described in \Cref{propquasi}, the two rings
are canonically isomorphic. Thus, we can assign algebraic values to algebraic $q$-expansions in order to study congruences between them.
 \Cref{Prop20V} allows us to assign a $q$-expansion to every  quasimodular form $f\in \widetilde{M_*}(\Gamma)$: we simply plug in
the $q$-expansions of modular forms and $E_2$ into the expression. Namely, for $g\in \widetilde{M}_k(\Gamma)$, \Cref{Prop20V} (2) implies
\[
g=\sum_{r=0}^{\lfloor k/2 \rfloor} F_{k-2r} E_2^r \in \C \llbracket q \rrbracket \hspace{3mm} \text{with} \hspace{3mm} F_{k-2r} \in M_{k-2r}(\Gamma).
\]
We will identify $g\in \widetilde{M}_k(\Gamma)$ with the associated {\it almost holomorphic} form $g^* \in \widehat{M}_k(\Gamma)$ via the isomorphism between $\widetilde{M}_*(\Gamma)$ and
$\widehat{M}_*(\Gamma)$ which preserves the gradation, and set the value
\[
g^*(\tau_0)=\sum_{r=0}^{\lfloor k/2 \rfloor} F_{k-2r}(\tau_0) (E^*_2(\tau_0))^r \in \C \hspace{3mm} \text{with} \hspace{3mm} F_{k-2r} \in M_{k-2r}(\Gamma).
\]

From now on, we fix an algebraic number field $K$ which is large enough to contain the relevant quantities below, and we denote its ring of integers by ${\cO}$.

With these notations, we have the following algebraicity statement.

\begin{proposition}[Katz's version of Damerell's theorem, {\cite[Theorem 4.0.4]{Katz}}] \label{Damerell} 
\mbox{} 

If $\tau_0 \in K$,
then there exists an $\omega \in \C^*$ such that 
\[
\text{if $g\in \widetilde{M}_k(\Gamma) \cap K\llbracket q \rrbracket$, then $g^*(\tau_0)/\omega^k \in K$}.
\]
\end{proposition}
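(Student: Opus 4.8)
The plan is to peel off the quasimodular structure using the results of \Cref{secquasi} and then invoke Katz's algebraicity theorem for the resulting almost holomorphic modular form. First I would apply \Cref{Prop20V}(2) to write an arbitrary $g\in\widetilde{M}_k(\Gamma)\cap K\llbracket q\rrbracket$ as $g=\sum_{r=0}^{\lfloor k/2\rfloor}F_{k-2r}E_2^r$ with $F_{k-2r}\in M_{k-2r}(\Gamma)$. Because each space $M_{k-2r}(\Gamma)$ has a basis of forms with $q$-expansions in $K$ (indeed in $\Q$), and $E_2\in\Q\llbracket q\rrbracket$, this decomposition is defined over $K$; hence the hypothesis $g\in K\llbracket q\rrbracket$ forces $F_{k-2r}\in M_{k-2r}(\Gamma)\cap K\llbracket q\rrbracket$ for every $r$. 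Transporting $g$ through the canonical isomorphism of \Cref{propquasi} (which fixes genuine modular forms and sends $E_2\mapsto E_2^*$), the value $g(\tau_0)$ prescribed in the text is precisely the value at $\tau_0$ of the almost holomorphic modular form $\widehat{g}=\sum_{r}F_{k-2r}(E_2^*)^r$, which is nearly holomorphic of weight $k$ with Fourier coefficients in $K$.

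Next I would invoke \cite[Theorem 4.0.4]{Katz}. Since $\tau_0\in K$ is a CM point, it corresponds to an elliptic curve with complex multiplication defined over $K$, to which Katz attaches a period $\omega\in\C^*$ depending only on $\tau_0$ (via the CM elliptic curve together with a fixed algebraic differential), and not on $g$ or on the weight $k$. Katz's theorem then states that any nearly holomorphic modular form of weight $k$ with $K$-rational Fourier coefficients takes at $\tau_0$ a value lying in $K\cdot\omega^k$. Applying this to $\widehat{g}$ immediately gives $g(\tau_0)/\omega^k=\widehat{g}(\tau_0)/\omega^k\in K$. Equivalently, one argues summand by summand: Katz's theorem applied to each genuine form $F_{k-2r}$ yields $F_{k-2r}(\tau_0)/\omega^{k-2r}\in K$, while applied to the weight-two almost holomorphic form $E_2^*$ it yields $E_2^*(\tau_0)/\omega^2\in K$, so that $F_{k-2r}(\tau_0)(E_2^*(\tau_0))^r/\omega^k\in K$ and the sum lies in $K$.

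The substantive content is entirely carried by Katz's theorem, and in particular by the uniformity it provides: one single period $\omega$ serves every weight and every form simultaneously. This uniformity --- rather than mere algebraicity of individual values --- is exactly what we shall exploit when comparing Taylor coefficients of different weights, so I would take care to record it explicitly. Accordingly, the main obstacle is not the reduction, which is routine given \Cref{Prop20V} and \Cref{propquasi}, but checking that $\widehat{g}$ genuinely lies in the class of functions governed by \cite[Theorem 4.0.4]{Katz}: concretely, that $E_2^*$ is covered as the prototypical nearly holomorphic form of weight $2$ in Katz's normalization, and that $K$-rationality of the $q$-expansion survives the $E_2$-decomposition. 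As this subsection assumes integral weight throughout, no half-integral subtleties intervene here; those are deferred to the reduction carried out in the proof of \Cref{thmmain}.
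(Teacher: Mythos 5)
Your proposal is correct and takes essentially the same approach as the paper: the paper defines the value $g(\tau_0)$ of a quasimodular form precisely via the $E_2$-decomposition of \Cref{Prop20V} and the isomorphism with almost holomorphic forms from \Cref{propquasi}, and then states \Cref{Damerell} as a direct citation of \cite[Theorem 4.0.4]{Katz}, which is exactly what your argument reduces to. The extra points you make explicit --- $K$-rationality of the components $F_{k-2r}$ and the independence of $\omega$ from $g$ and from the weight $k$ --- agree with the remarks the paper places immediately before and after the proposition.
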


We now pass to the question about congruences. 
Given two formal power series $g_1=\sum_{n=0}^\infty b_1(n) q^n,\ g_2=\sum_{n=0}^\infty
b_2(n) q^n\in K\llbracket q\rrbracket$, we say that $g_1 \equiv g_2 \pmod {p^A}$ if their coefficients are congruent modulo $p^A$, i.e. if $b_1(n) - b_2(n) \in p^A\cO$ for all $n$. 
This notation applies, in particular, to the situation when $g_1$ and, $g_2$ are (the $q$-expansions of) quasimodular forms in  $ \widetilde{M_*}(\Gamma) \cap K\llbracket q \rrbracket$.

Clearly, if $\omega \in \C^\times$ works in \Cref{Damerell} then so does any $K^\times$-multiple.
Note furthermore that if $\omega \in \C^\times$ satisfies \Cref{Damerell} for any single $g\in \widetilde{M}_k(\Gamma) \cap K\llbracket q \rrbracket$ then so it also does 
for all $g\in \widetilde{M}_k(\Gamma) \cap K\llbracket q \rrbracket$, and we will make a specific choice now.

The discussion above puts no restrictions on the prime $p$ under consideration. From now on, we assume that $p$ splits in $\Q(\tau_0)$. 
The first consequence of this choice is that $E_{p-1}(\tau_0) \neq 0$ (see Section 2.1 of \cite{KaM}), which allows us to pick $\omega \neq 0$ in the following proposition.

This proposition is nothing but a specialization to our notations of the  (more general) $q$-expansion principle from \cite[Section 5.2]{Katz} combined with a $p$-adic version of 
Damerell's theorem \cite[Comparison Theorem 8.0.9]{Katz} It was formulated and proved as \cite[Lemma 1]{DG}  in the special case when $N=1$. 
For the general case which we need here, the proof follows mutatis mutandis; we have omitted this simple translation of the proof given in \cite{DG} for notational simplicity.

\begin{prop}[{\cite[Lemma 1]{DG}}] \label{DGlemma}
Assume that $p$ splits in $\Q(\tau_0)$.
Pick a complex number $\omega_p$ so that $\omega_p^{p-1} = E_{p-1}(\tau_0)$. 
For $i=1,2$ let 

\[
g_i=\sum_{n=0}^\infty b_i(n) q^n \in  \widetilde{M}_{k_i} (\Gamma)\cap \cO\llbracket q \rrbracket.
\]
If
\[
g_1 \equiv g_2 \pmod{ p^A}
\]
for a positive integer $A$, then
\[
g_1^*(\tau_0)/\omega_p^{k_1} \equiv g_2^*(\tau_0)/\omega_p^{k_2} \pmod{ p^A}.
\]
\end{prop}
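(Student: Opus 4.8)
The plan is to reduce the congruence of values to a statement about congruences of $q$-expansions of genuinely modular (not merely quasimodular) forms, for which the ``$p$-adically close forms have $p$-adically close values'' principle can be applied cleanly. The first issue is that $g_1$ and $g_2$ have different weights $k_1,k_2$, so their values $g_i(\tau_0)/\omega^{k_i}$ live in different normalizations; I would first homogenize the weights. Assuming without loss of generality that $k_1 \leq k_2$, I would multiply $g_1$ by a suitable power of the Eisenstein series $E_{p-1}$, namely consider $E_{p-1}^m \cdot g_1$ where $m(p-1) = k_2 - k_1$ (after first arranging, by the Kummer-type congruence $E_{p-1} \equiv 1 \pmod p$, that raising to the appropriate power does not disturb the congruence class). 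The congruence $E_{p-1} \equiv 1 \pmod p$ is what makes this harmless modulo $p$; to get it modulo $p^A$ one passes to a high enough power, using that $E_{p-1}^{p^{A-1}(p-1)} \equiv 1 \pmod{p^A}$ or more simply by working with the relevant power of $E_{p-1}$ that is $\equiv 1 \pmod{p^A}$. After this reduction both forms have the same weight, and by the choice $\omega^{p-1} = E_{p-1}(\tau_0)$ the extra Eisenstein factor contributes exactly the missing power of $\omega$ to the value.

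Once the weights agree, say both forms lie in $\widetilde M_{k}(\Gamma) \cap K\llbracket q\rrbracket$ with $g_1' \equiv g_2' \pmod{p^A}$, I would invoke \Cref{Prop20V}(2) to write each as a polynomial in $E_2$ with modular coefficients, and then pass through the canonical isomorphism of \Cref{propquasi} to the associated almost holomorphic forms, whose value at $\tau_0$ is what we are computing. The key reduction here is to the \emph{modular} coefficients $F_{k-2r} \in M_{k-2r}(\Gamma)$ appearing in the expansion $g' = \sum_r F_{k-2r} E_2^r$. If I can show that the map sending a quasimodular $q$-expansion to its tuple of modular components $(F_{k-2r})_r$ is defined over $\cO$ (i.e.\ sends $q$-expansions with coefficients in $p^A\cO$ to components with coefficients in $p^A\cO$, up to a bounded $p$-denominator absorbed into enlarging $A$ or $K$), then the congruence $g_1' \equiv g_2' \pmod{p^A}$ descends to congruences $F^{(1)}_{k-2r} \equiv F^{(2)}_{k-2r} \pmod{p^{A'}}$ componentwise, for each fixed $r$, between genuine modular forms of weight $k-2r$.

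For the genuinely modular pieces, the desired conclusion is precisely \Cref{Damerell} combined with the classical fact --- the substance of \cite{Katz} --- that a congruence of $q$-expansions of modular forms of a \emph{fixed} weight over $\cO$ implies the congruence of their algebraic values $F(\tau_0)/\omega^{k-2r}$ modulo the same power of $p$, given that $\tau_0$ is a CM point with $p$ split and $\omega$ chosen via $E_{p-1}(\tau_0) \neq 0$. The value $g'(\tau_0)/\omega^k$ is then reassembled as $\sum_r \bigl(F_{k-2r}(\tau_0)/\omega^{k-2r}\bigr)\bigl(E_2^*(\tau_0)/\omega^2\bigr)^r$, and since each factor $E_2^*(\tau_0)/\omega^2 \in K$ is a fixed algebraic constant independent of $g_1,g_2$, the componentwise congruences assemble to the claimed congruence of the full values.

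\textbf{Main obstacle.} I expect the crux to be the integrality of the decomposition into modular components, i.e.\ controlling the $p$-denominators introduced when expressing a quasimodular form as a polynomial in $E_2$ over $\cO$ and when applying the isomorphism of \Cref{propquasi} at the level of $\cO$-lattices rather than merely over $K$. One must ensure that the structural constants relating $q$-expansions to values (including the algebraicity in \Cref{Damerell}, which a priori only gives membership in $K$, not integrality) do not destroy the congruence modulo $p^A$; this is exactly where the splitting of $p$ and the nonvanishing $E_{p-1}(\tau_0) \neq 0$ enter, since they guarantee $\omega$ is a $p$-adic unit after suitable normalization and that $E_2^*(\tau_0)/\omega^2$ is $p$-integral. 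Making the denominator bound uniform (independent of the $g_i$, depending only on $\tau_0$, $\Gamma$, and the weight) is the careful bookkeeping that \cite{DG} carries out and which I would adapt here.
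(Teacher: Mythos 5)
First, a point of comparison: the paper does not actually reprove this proposition. Its ``proof'' is the citation to \cite[Lemma 1]{DG}, plus the remark that the level-$\Gamma_1(N)$ case follows mutatis mutandis; the argument in \cite{DG} runs through Katz's theory \cite{Katz} of $p$-adic modular forms --- the quasimodular form, via $E_2$, \emph{is} a $p$-adic modular form, and evaluation at the ordinary CM test object attached to $\tau_0$ (this is exactly where splitness enters) respects congruences of $q$-expansions by the $q$-expansion principle, the normalization $\omega^{p-1}=E_{p-1}(\tau_0)$ matching the complex value with the $p$-adic one. Your route is genuinely different, and its crux has a real gap. The step you dismiss as bookkeeping --- that a congruence $g_1'\equiv g_2'\pmod{p^A}$ of quasimodular $q$-expansions descends, up to a bounded denominator, to congruences of the modular components $F_{k-2r}$ in $g'=\sum_r F_{k-2r}E_2^r$ --- is not a structural fact about the decomposition of \Cref{Prop20V}. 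The $E_2$-decomposition genuinely creates denominators, and they grow with the depth: for instance
\[
DE_4=\tfrac13\left(E_2E_4-E_6\right),\qquad D^2E_4=\tfrac{5}{36}\left(E_2^2E_4-2E_2E_6+E_4^2\right),
\]
both of which have integral $q$-expansions; thus $3\,DE_4\equiv 0\pmod 3$ while its components $E_4$ and $-E_6$ are units mod $3$. (The prime $3$ is outside the scope of \Cref{DGlemma} since $E_{p-1}$ must be a classical Eisenstein series, but the example shows the descent is not formal and any proof of it must use $p\geq 5$ and fine integral structure of rings of quasimodular forms of level $\Gamma_1(N)$.) A lattice-commensurability argument does give \emph{some} loss constant, but one depending on the weight and depth. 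That is fatal twice over: \Cref{DGlemma} asserts the congruence with \emph{no} loss, and in the proof of \Cref{thmmain} it is applied to $\Theta D^{n_i}H$ of unbounded weights $k+2n_i$ with $A$ fixed, so ``absorbing the loss into $A$'' is only admissible if the loss is uniform in the weight --- nothing in your argument supplies this, and proving it is essentially equivalent to the $p$-adic theory you are trying to avoid.

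There are two further gaps. (i) The weight-homogenization step needs $(p-1)p^{A-1}\mid k_2-k_1$: the exponent $m=(k_2-k_1)/(p-1)$ is forced on you, not chosen, so ``passing to a higher power of $E_{p-1}$'' is not available. That divisibility is the Serre--Katz weight-congruence theorem, which is applicable only when $g_1\not\equiv 0\pmod p$, so one also needs a d\'evissage for forms divisible by $p$, which in turn requires $p$-integrality of $h(\tau_0)/\omega^{k}$ for $\cO$-integral $h$ --- an integrality statement that \Cref{Damerell} does not provide (it gives membership in $K$ only). (ii) Your terminal black box --- congruent $q$-expansions of genuine modular forms of one fixed weight have congruent normalized values --- is not a separate ``classical fact'': it is precisely \Cref{DGlemma} in the equal-weight holomorphic case, and the paper's own remark (the inert example $E_{12}$ versus $E_{12}+13\Delta$, where algebraicity and the congruence hypothesis both hold yet the conclusion fails) shows it carries all of the arithmetic content and cannot follow from \Cref{Damerell} plus soft arguments. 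In effect the proposal relocates the entire difficulty into two unproven (and, as stated, partly false) lemmas; the intended proof instead feeds the quasimodular forms directly into Katz's evaluation theory, which is what \cite[Lemma 1]{DG} does.
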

\begin{rem} A naive explanation for the choice of $\omega_p$ is as follows. Since $E_{p-1} \equiv 1 \pmod p$ by the von Staudt-Clausen Theorem, we also ought to have $E_{p-1}(\tau_0) \equiv 1 \pmod p$.
In other words, if \Cref{DGlemma} is true for some choice of $\omega_p$, then this should be a correct choice. Note however that the proposition is simply false as stated for
inert primes although it may still happen that $E_{p-1}(\tau_0) \neq 0$: For example, the prime $13$ is inert in $\Q(\sqrt{7})$, but for $\tau_0=\frac{1+i\sqrt{7}}{2}$ we have $E_{12}(\tau_0)\approx 0.98818418\neq 0$. Now the two weight $12$ modular forms $E_{12}$ and $E_{12}+13\Delta$ with $\Delta:=(E_4^3-E_6^2)/1728$ denoting the usual $\Delta$ function (which has integer Fourier coefficients)  are obviously congruent modulo $13$, but choosing $\omega_p$ as specified in \Cref{DGlemma}, we find that
\[E_{12}(\tau_0)/\omega_p^{12}=1\qquad\text{and}\qquad (E_{12}(\tau_0)+13\Delta(\tau_0))/\omega_p^{12}=\frac{211934}{212625}\equiv 6\pmod{13}.\]
\end{rem}

\subsection{Multiplication by the $\Theta$-function and passage to half-integral weight}
Here we sketch how to generalize the results of the preceding subsection to half-integral weight.
A generalization of Katz's theory to half-integral weight
 has also been developed by Ramsey \cite{Ramsey}. Although it is based on similar ideas, Ramsey's generalization is less explicit than our approach here,
and is not intended in the specific direction which we require, and so is less convenient for our purposes.
To move from integral weight to half-integral weight, we use the simple (and common) technique of multiplying by Jacobi's $\Theta(\tau)$. Thanks to 
Jacobi's identity
\begin{equation}
 \label{etas}
\Theta(\tau) = \frac{\eta^5(2\tau)}{\eta^2(\tau) \eta^2(4\tau)},
\end{equation}
where $\eta(\tau):=q^{1/24}\prod_{n=1}^\infty (1-q^n)$ denotes the Dedekind eta function, $\Theta$ does not vanish in the interior of the upper half-plane.
We then need the following result on the action of this multiplication by $\Theta$ operation on quasimodular forms.
\begin{lemma} \label{diff}
Let $H \in \C\llbracket q \rrbracket$ be such that the product 
\[
\Theta H \in \widetilde{M}_k(\Gamma)
\] 
is (a $q$-expansion of) a quasimodular form 
of weight $k\in\Z$ on $\Gamma=\Gamma_1(N)$, where $4|N$. 
Then 
\[
\Theta DH \in \widetilde{M}_{k+2}(\Gamma).
\]
\end{lemma}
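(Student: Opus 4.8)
The plan is to reduce everything to the Leibniz rule for $D$ together with the fact, recorded in \Cref{Prop20V}(1), that $D$ preserves quasimodularity while raising the weight by $2$. The point is that although $H$ itself need not be quasimodular, the combination $\Theta\,DH$ can be rewritten entirely in terms of honest \emph{integral}-weight quasimodular forms. Working at the level of formal $q$-expansions (where $\Theta=1+2q+\dots$ is invertible and $D=q\tfrac{d}{dq}$ is a derivation), I would apply the product rule to $\Theta H$ and rearrange:
\[
\Theta\,DH = D(\Theta H) - (D\Theta)\,H = D(\Theta H) - \frac{D\Theta}{\Theta}\,(\Theta H).
\]
Since $\Theta H\in\widetilde{M}_k(\Gamma)$ by hypothesis, \Cref{Prop20V}(1) gives $D(\Theta H)\in\widetilde{M}_{k+2}(\Gamma)$. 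It therefore suffices to show that the second term $\tfrac{D\Theta}{\Theta}\,(\Theta H)$ is a quasimodular form of weight $k+2$ on $\Gamma$.

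The crux is to prove that $\tfrac{D\Theta}{\Theta}$ is a genuinely integral-weight quasimodular form of weight $2$ on $\Gamma_0(4)\supseteq\Gamma$. I would establish this by an explicit logarithmic differentiation of the eta-product in \eqref{etas}: using $\tfrac{D\eta}{\eta}=\tfrac{1}{24}E_2$ and the chain rule $\tfrac{D\eta(m\tau)}{\eta(m\tau)}=\tfrac{m}{24}E_2(m\tau)$, one obtains
\[
\frac{D\Theta}{\Theta} = \frac{1}{12}\bigl(5E_2(2\tau) - E_2(\tau) - 4E_2(4\tau)\bigr).
\]
Each of $E_2(\tau),E_2(2\tau),E_2(4\tau)$ is a quasimodular form of weight $2$ on $\Gamma_0(4)$ with moderate growth at all cusps, so this linear combination lies in $\widetilde{M}_2(\Gamma_0(4))\subseteq\widetilde{M}_2(\Gamma)$. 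Writing the logarithmic derivative in this form has two advantages over the naive quotient: it makes the integral weight manifest, since the half-integral multiplier system of $\Theta$ cancels in the logarithmic derivative, and it exhibits moderate growth at the cusps even though $\Theta$ itself may vanish at some cusp.

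Finally I would invoke the closure of quasimodular forms under multiplication, for instance through the decomposition in \Cref{Prop20V}(2): a product $M_{a-2i}(\Gamma)E_2^i\cdot M_{b-2j}(\Gamma)E_2^j$ lands in $M_{(a-2i)+(b-2j)}(\Gamma)E_2^{\,i+j}\subseteq\widetilde{M}_{a+b}(\Gamma)$, so multiplication adds weights. With $a=2$ and $b=k$ this yields $\tfrac{D\Theta}{\Theta}\cdot(\Theta H)\in\widetilde{M}_{k+2}(\Gamma)$, and combining with $D(\Theta H)\in\widetilde{M}_{k+2}(\Gamma)$ gives $\Theta\,DH\in\widetilde{M}_{k+2}(\Gamma)$, as desired. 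I expect the step requiring the most care to be the verification that $\tfrac{D\Theta}{\Theta}$ is honestly quasimodular of integral weight $2$ with controlled growth at the cusps; once the explicit $E_2$-expression is in hand, the remainder is just the Leibniz rule and the ring structure of quasimodular forms.
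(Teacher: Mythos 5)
Your proposal is correct and is essentially the paper's own proof: both apply the Leibniz rule to $D(\Theta H)$, reduce to showing $\tfrac{D\Theta}{\Theta}\cdot(\Theta H)\in\widetilde{M}_{k+2}(\Gamma)$, and compute $\tfrac{D\Theta}{\Theta}=\tfrac{1}{12}\bigl(5E_2(2\tau)-E_2(\tau)-4E_2(4\tau)\bigr)$ by logarithmic differentiation of the eta-quotient \eqref{etas}. The only cosmetic difference is that the paper regroups this $E_2$-combination as explicit weight-$2$ modular forms plus $E_2(\tau)$ to make membership in $\widetilde{M}_2(\Gamma_0(4))$ manifest, whereas you invoke the quasimodularity of each $E_2(m\tau)$ on $\Gamma_0(4)$ directly.
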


\begin{proof}
Since $\Theta H \in \widetilde{M}_k(\Gamma)$, we have that
\[
D(\Theta H) = \Theta DH + H D\Theta \in \widetilde{M}_{k+2}(\Gamma),
\]
and it suffices to show that $H D\Theta \in \widetilde{M}_{k+2}(\Gamma)$.
It follows from \eqref{etas} that
\[
\begin{split}
24\frac{D\Theta}{\Theta}(\tau) & = 10E_2(2\tau)-2E_2(z)-8E_2(4\tau) \\ & = 10\left(E_2(2\tau)-\frac{1}{2} E_2(\tau)\right)
-8\left(E_2(4\tau)-\frac{1}{4}E_2(\tau)\right) +E_2(\tau) \in \widetilde{M}_{k+2}(\Gamma_0(4)),
\end{split}
\]
and
therefore
\[
H D\Theta = H \frac{D\Theta}{\Theta} \Theta \in \widetilde{M}_{k+2}(\Gamma)
\]
as required.
\end{proof}

\subsection{Periodicity of Taylor coefficients }
We now have all the pieces in place to prove our main results.
\begin{proof}[Proof of \Cref{thmmain}]
Let $p$ be a splitting prime in  $\Omega_p\in\C^\times$ be such that $\Omega_p^2=\omega_p$ with $\omega_p$ as in \Cref{DGlemma}. Suppose further that 
\[f\in M_{k-1/2}(\Gamma) \cap \cO\llbracket q \rrbracket\] 
is a half-integral weight modular form with algebraic integer Fourier coefficients, and assume that both
$f(\tau_0)/\Omega_p^{2k-1}$ and $\Theta(\tau_0)/\Omega_p$ lie in $K$.
By Euler's Totient Theorem, if both $n_1,n_2>A$, we have 
\[
n_1 \equiv n_2 \pmod {(p-1)p^A} \hspace{3mm} \text{implies} \hspace{3mm} D^{n_1}(f) \equiv D^{n_2}(f) \pmod {p^{A+1}}.
\]

Multiplication by $\Theta$ will preserve these congruences:
\[
 \Theta D^{n_1}f \equiv \Theta D^{n_2}f \pmod{ p^{A+1}}.
\]
\Cref{diff} (applied repeatedly) implies that both products 
\[
\Theta  D^{n_1}f  \in \widetilde{M}_{k+2n_1}(\Gamma) \hspace{3mm} \textup{and} \hspace{3mm} \Theta  D^{n_1}f \in   \widetilde{M}_{k+2n_2}(\Gamma),
\]
are quasimodular forms and we can apply \Cref{DGlemma} to derive the congruence 
\[
\left( \Theta  D^{n_1}f \right)^* (\tau_0) /\omega_p^{k+2n_1} \equiv \left( \Theta  D^{n_2}f \right)^* (\tau_0) /\omega_p^{k+2n_2}  \pmod {p^{A+1}}
\]
for the (normalized) values at $\tau_0$.
We now apply \Cref{corquasi} to evaluate the quasimodular forms
$ \Theta  D^{n_1}(f)$ and  $\Theta  D^{n_2}(f)$ of integral weight at $\tau_0$
\[
\left( \Theta  D^{n_i}f \right)^* (\tau_0)  = \Theta(\tau_0)  \partial^{n_i}f (\tau_0) \hspace{3mm} \textup{for} \hspace{3mm} i=1,2,
\]
and factor out $\Theta(\tau_0)$, which by \eqref{etas} is not $0$.
The extra assumption on $p$-integrality of the value $\phi_{p-1}(\tau_0)$   allows us to guarantee that
\[
v_p \left(\Nm^K_{\Q} \left(\Theta^2(\tau_0)/\omega \right) \right) = 0,
\]
where $\Nm$ is the norm map, and $v_p$ is the $p$-adic valuation. We then can cancel this quantity, and obtain the desired periodicity modulo powers 
of the splitting prime:
\begin{equation*} \label{final_congr}
\partial^{n_1}f (\tau_0)/\Omega^{2k+4n_1-1} \equiv  \partial^{n_2}f (\tau_0)/\Omega^{2k+4n_2-1} \pmod{ p^{A+1}}.
\end{equation*}
\end{proof}


\subsection{Deligne's congruence and proof of \Cref{corRomik}}

So far,  \Cref{thmmain} claims the existence of $\Omega_p \in \C^\times$ which depends on the splitting prime $p$.
However, the conjecture of Romik is stated for a global choice, common for all primes. In this subsection, we show how to make a global choice,
and compare that with the choice made by Romik in \cite{Romik}. The fact that these two choices differ by a $p$-adic unit for every splitting prime $p>2$ will
allow us to derive  \Cref{conjRomik} from our  \Cref{thmmain}.

\begin{proof}[Proof of \Cref{corRomik}]
Let $K=\Q(\tau_0)$ be an imaginary quadratic field.
Define $\omega=\omega_{\tau_0}$ to be the real period of the  CM elliptic curve $E=\C/\langle\omega,\omega \tau_0\rangle_\Z$,
and let 
\[
\wp(z) := \frac{1}{z^2} + \sum_{n\geq 2} c_n z^{2n-2}
\]
be the associated Weierstrass $\wp$-function.

By assumption, the elliptic curve $E$ is defined over $\Q$ and the class number of $K$ is $1$, which implies that $c_n \in \mathbb Q$, where 
\[
c_n=(2n-1) \omega^{-2n} \sum_{(0,0) \neq (m,n)\in \Z \times \Z}\frac{1}{(m\tau_0+n)^{2n}}.
\]
These quantities are nothing but the values of Eisenstein series at $\tau_0$, properly normalized.
Namely (cf. \cite[Section 2.2]{Zagier} for the notations and the normalizations of Eisenstein series $\mathbb G_k$ and $E_k:=-\frac{2k}{B_k} {\mathbb G}_k$), we have
\[
c_n= 2 \left( \frac{2 \pi i}{\omega} \right)^{2n} \frac{1}{(2n-2)!} {\mathbb G}_{2n}(\tau_0).
\]

We now define Bernoulli-Hurwitz numbers $BH(2n)$ for integers $n\geq 1$ following \cite{KatzBH} as 
\[
c_n=:\frac{BH(2n)}{2n} \frac{1}{(2n-2)!},
\]
and with the above notations we have that
\[
BH(2n) = - \left( \frac{2\pi i}{\omega} \right)^{2n} B_{2n} E_{2n}(\tau_0),
\]

By assumption, $E$ has good reduction at $p$ and 
denote now by $A(p) \in \mathbb F_p = \mathbb Z/p\mathbb Z$ the Hasse invariant  of its modulo $p$ reduction.
All we need to know here is that $A(p) \neq 0$ if and only if the elliptic curve has good ordinary reduction at $p$, 
i.e., the prime $p$ splits in $\Q(\tau_0)$.

We now quote a special case of the part 1 of the theorem proved in \cite{KatzBH} :
\[
p \cdot BH(p-1) \equiv A(p) \pmod p.
\]
We translate this congruence using the above notations into
\[
-pB_{2n} \left( \frac{2\pi i}{\omega} \right)^{2n} E_{p-1}(\tau_0) \equiv A(p) \pmod p
\]
which simplifies using the von Staudt-Clausen congruence $pB_{p-1} \equiv -1 \pmod p$ into
\begin{equation} \label{deligne}
\left( \frac{2\pi i}{\omega} \right)^{p-1} E_{p-1}(\tau_0) \equiv A(p) \pmod p,
\end{equation}
that is the left-hand side is an algebraic integer which is non-zero modulo $p$ if and only if $p$ splits in $\Q(\tau_0)$.

We now compare the local choice of $\omega_p$ from \Cref{DGlemma} which was $\omega_p^{p-1} = E_{p-1}(\tau_0)$ with the global (i.e. independent of $p$) 
$\omega$ in  \eqref{deligne}, and conclude that the ratio of two omegas is a $p$-adic unit as we wanted. This completes the proof of \Cref{corRomik}.
\end{proof}
\begin{remark*}
In the case when $\tau_0=i$, congruence \eqref{deligne} was proved by Hurwitz in \cite{Hurwitz}. 
The above exposition follows closely Katz's paper \cite{KatzBH}, where a short proof based on the $q$-expansion principle of more general congruences  is presented.
An independent and elementary proof is presented by Kaneko and Zagier in \cite[Section 3]{KaZa} where a slightly different normalization for Eisenstein series is chosen.
In this paper (and in many others, in fact), the congruence is attributed to Deligne. 
\end{remark*}
\begin{remark*}
In the case when $\tau_0=i/2$, which is the objective of part (2) of \Cref{conjRomik}, it is classical \cite{Hurwitz}\footnote{In loc. cit., the computation is done for the CM point $\tau_0=i$, but one can use the same method to get the result for $i/2$.}  (or see \cite[Section 9.6]{Husemoller}) that
\[
\omega= \frac{1}{2} \sqrt{\pi} \frac{\Gamma(1/4)}{\Gamma(3/4)}=2\int_0^1 \frac{dx}{\sqrt{1-x^4}} = 2.62205755429211 \ldots
\]
Note that compared to Romik's choice of normalization, we find that $\omega^4=2\pi^2\Phi^2$. The factor of $2$ is of no importance as it is a $p$-adic unit for any odd prime and the additional power of $\pi$ originates from the different normalizations of the series defined in \eqref{eqtaylor} and  \eqref{eqRomik}.

The elliptic curve $\C/\langle\omega,\omega \tau_0\rangle_\Z$ in this special case has Weierstrass equation
\[
y^2=4x^3 -44x+56 \hspace{3mm} \text{with the non-vanishing differential $dx/y$.}
\]
\end{remark*}


\section{Examples}\label{secExamples}
In this section, we present several examples for the periodicity of Taylor coefficients at two different CM points, $\tau_0=i$
and $\tau_0=\z_7=\frac{1+i\sqrt{7}}{2}$. For the sake of being completely explicit, we focus on modular forms for the group $\Gamma_0(4)$. It is a well-known fact, which is easily verified using the dimension formula for spaces of modular forms for this group, that the algebra of modular forms for this group is a free polynomial algebra on two generators. More precisely, we have 
\[M_*(\Gamma_0(4))=\C[\Theta,F_2],\]
where the usual Jacobi theta function $\Theta(\tau)$ was defined in \eqref{etas} and 
\[F_2(\tau):=\eta(4\tau)^8\eta(2\tau)^{-4}=\sum_{n\,odd} \sigma_1(n)q^n\] 
is a weight $2$ modular form  (see for instance \cite{Cohen}). Note that in odd integer weight $k$, we include the spaces $M_k(\Gamma_0(4),\chi_{-4})$ transforming with the non-trivial Nebentypus modulo $4$ rather than those with trivial Nebentypus, which would be empty anyway.

It follows from the general theory of complex multiplication (cf. \cite[Proposition 26 and p. 84]{Zagier}) that the values of any weight $k$ modular form for $\Gamma_0(4)$ at a CM point $\tau_0$ of (fundamental) discriminant $D$ are algebraic multiples of $\Omega_D^k$, where 
\[\Omega_D=\frac{1}{\sqrt{2\pi |D|}}\left(\prod\limits_{j=1}^{|D|-1}\Gamma(j/|D|)^{\chi_D(j)}\right)^\frac{1}{2h'(D)},\]
$\chi_D=\left(\frac{D}{\cdot}\right)$ denotes the Kronecker character and $h'(D)$ denotes the modified class number of discriminant $D$, i.e. the number of $\SLZ$-equivalence classes of integral positive definite binary quadratic forms of discriminant $D$ multiplied by $1/3$ or $1/2$ if $D=-3$ or $D=-4$ resp. Indeed we find that
\begin{gather}
\label{eqvaluesi}
\Theta(i)=\sqrt[4]{\frac{3+2\sqrt{2}}{2}}\Omega_{-4}^{1/2} \quad\text{and}\quad F_2(i)=\frac{3-2\sqrt{2}}{32}\Omega_{-4}^2, 
\end{gather}
\begin{gather}
\label{eqvaluesz7}
\Theta(\z_7)=\sqrt[4]{\frac{8+3\sqrt{7}}{4}}\Omega_{-7}^{1/2} \quad\text{and}\quad F_2(\z_7)=-\frac{8-3\sqrt{7}}{2^{6}}\Omega_{-7}^2.
\end{gather}

Closely following the proof of \cite[Proposition 28]{Zagier} we offer the next two propositions which allow us to compute the Taylor coefficients of any modular form for $\Gamma_0(4)$ at one of the points $i$ and $\z_7$ recursively. This method can be used completely analogously for Taylor coefficients at any other CM point, which is also why we only give a detailed proof of \Cref{propTaylori}. Generalizing the method to other groups than $\Gamma_0(4)$ is also possible, but some care must be taken if the algebra of modular forms in question is not a free polynomial algebra, which it usually is not.

Before formulating the propositions, we introduce the following modification of the Serre derivative (see \cite[Equation (67)]{Zagier}\footnote{Note that in loc. cit., there is a slight typographical error in that the additional application of $\vartheta_\phi$ to $\vartheta_\phi^{[n]}f$ in the definition of $\vartheta_\phi^{[n+1]}f$ is omitted there.}). Let $\phi$ be any quasimodular form of weight $2$ for $\Gamma_0(4)$ such that the associated almost holomorphic modular form is given by $\phi^*(\tau)=\phi(\tau)-\frac{1}{4\pi y}$, hence transforms like a modular form of weight $2$. The Eisenstein series $\tfrac{1}{12}E_2$ for instance would be a valid choice, but not always the most convenient one, as illustrated for instance in \Cref{propTaylorz7}. Then we define the modified Serre derivative by
\begin{gather}
\vartheta_\phi f:=Df-k\phi f
\end{gather}
for $f\in M_k(\Gamma_0(4))$. This function maps $M_k(\Gamma_0(4))$ to $M_{k+2}(\Gamma_0(4))$, like the usual Serre derivative. The iterated version of this operator $\vartheta_\phi^{[n]}\colon M_k(\Gamma_0(4))\to M_{k+2n}(\Gamma_0(4))$ is then defined recursively via
\begin{gather}\label{eqmodifiedSerre}
\vartheta_\phi^{[0]}f=f,\quad \vartheta_\phi^{[1]}f=\vartheta_\phi f,\quad\vartheta_\phi^{[n+1]}=\vartheta_\phi(\vartheta_\phi^{[n]} f)+n(k+n-1)\psi \vartheta_\phi^{[n-1]} f\quad (n\geq 1),
\end{gather}
where $\psi\in M_4(\Gamma_0(4))$ is given by $\psi=D\phi-\phi^2$. In the special case for instance where $\phi=\tfrac{1}{12}E_2$, we have $\psi=-\tfrac{1}{144}E_4$. In this particular case, we omit the subscript of the operator, so $\vartheta^{[n]}:=\vartheta^{[n]}_{\frac 1{12}E_2}$.

Our first proposition now gives the claimed recursion for the Taylor coefficients of a modular form at the point $i$.
\begin{proposition}\label{propTaylori}
Let $f\in M_k(\Gamma_0(4))$ with $k\in\frac12\Z$ and let $P(X,Y)\in\C[X,Y]$ be a polynomial such that $P(\Theta,F_2)=f$. Then 
\[\partial^nf(i)=\left(\frac{3+2\sqrt{2}}{2}\right)^{n+k/2}p_n\left(\frac{17-12\sqrt{2}}{16}\right)\Omega_{-4}^{2n+k}=p_n\left(\frac{17-12\sqrt{2}}{16}\right)\Theta(i)^{4n+2k},\]
where $p_n(t)$ is the polynomial defined recursively by
\begin{align*}
p_{-1}(t)&=0,\quad p_0(t)=\frac{P(X,tX^4)}{X^{2k}},\\
p_{n+1}(t)&=\frac{1}{24}(80t-1)(2k+4n)p_n(t)-(16t^2-t)p_n'(t)\\
          &\qquad\qquad\qquad\qquad-\frac{1}{144}n(n+k-1)(256t^2+224t+1)p_{n-1}(t)\ (n\geq 0).
\end{align*}
\end{proposition}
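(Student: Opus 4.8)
The plan is to reduce the computation of the non-holomorphic values $\partial^n f(i)$ to the values at $i$ of the \emph{holomorphic} forms $\vartheta^{[n]}f\in M_{k+2n}(\Gamma_0(4))$, and then to convert the recursion \eqref{eqmodifiedSerre} for the latter into the stated recursion for the $p_n$. The bridge is the weight-zero modular function $t:=F_2/\Theta^4$ on $\Gamma_0(4)$: from the values in \eqref{eqvaluesi} together with $(3-2\sqrt2)(3+2\sqrt2)=1$ one computes
\[
t(i)=\frac{F_2(i)}{\Theta(i)^4}=\frac{3-2\sqrt2}{16(3+2\sqrt2)}=\frac{(3-2\sqrt2)^2}{16}=\frac{17-12\sqrt2}{16},
\]
which is exactly the argument appearing in the statement. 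Writing $f=P(\Theta,F_2)$ and using that $M_*(\Gamma_0(4))=\C[\Theta,F_2]$ is a free polynomial algebra, every monomial $\Theta^aF_2^b$ of weight $k+2n$ satisfies $a-(4n+2k)=-4b$, so division by $\Theta^{4n+2k}$ turns each holomorphic form $\vartheta^{[n]}f$ into a genuine polynomial $p_n(t)=\vartheta^{[n]}f/\Theta^{4n+2k}$ in $t$; in particular $p_0(t)=f/\Theta^{2k}=P(X,tX^4)/X^{2k}$ by homogeneity, which is the stated base case.

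First I would record the two operator identities that drive everything. With $\phi=\tfrac1{12}E_2$ and $\phi^*=\phi+Y=\tfrac1{12}E_2^*$, a direct computation from \eqref{eqraising} gives, for a weight-$k$ form $g$,
\[
\partial_k g=\vartheta_\phi g+k\phi^* g,\qquad \partial\phi^*=\psi+(\phi^*)^2,
\]
where $\psi=D\phi-\phi^2=-\tfrac1{144}E_4$ as in the text, and $\partial$ obeys the Leibniz rule $\partial(gh)=(\partial g)h+g(\partial h)$ when the weights add. The crucial arithmetic input is the classical evaluation $E_2(i)=3/\pi$, which is equivalent to $\phi^*(i)=\tfrac1{12}E_2^*(i)=0$.

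With these in hand, the reduction step is to prove by induction on $n$ the closed expansion
\[
\partial^n f=\sum_{s=0}^{n}\binom ns\frac{(k+n-1)!}{(k+n-1-s)!}\,(\phi^*)^s\,\vartheta^{[n-s]}f\ \in\ \widehat{M}_{k+2n}(\Gamma_0(4)),
\]
the inductive step being a bookkeeping exercise combining the Leibniz rule, $\partial\phi^*=\psi+(\phi^*)^2$, and the defining recursion \eqref{eqmodifiedSerre} (one checks that the two resulting coefficient recursions are solved simultaneously by the binomial coefficients above). Since $\phi^*(i)=0$, every term with $s\geq1$ vanishes, leaving $\partial^n f(i)=\vartheta^{[n]}f(i)$. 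The recursion for $p_n$ then follows by substituting $\vartheta^{[n]}f=\Theta^{4n+2k}p_n(t)$ into \eqref{eqmodifiedSerre} and using the three identities
\[
Dt=\Theta^4(t-16t^2),\qquad \frac{D\Theta}{\Theta}=-\tfrac1{24}\Theta^4+\tfrac{10}{3}F_2+\tfrac1{24}E_2,\qquad E_4=\Theta^8(1+224t+256t^2),
\]
each verified by comparing finitely many $q$-expansion coefficients. In the Serre-derivative term the $E_2$ (equivalently $\phi$) contributions cancel by weight bookkeeping, leaving precisely $\tfrac1{24}(80t-1)(2k+4n)p_n-(16t^2-t)p_n'$, while the correction $n(k+n-1)\psi\vartheta^{[n-1]}f$ produces $-\tfrac1{144}n(n+k-1)(256t^2+224t+1)p_{n-1}$. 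Finally, combining $\partial^n f(i)=\vartheta^{[n]}f(i)=\Theta(i)^{4n+2k}p_n(t(i))$ with $\Theta(i)^4=\tfrac{3+2\sqrt2}{2}\Omega_{-4}^2$ from \eqref{eqvaluesi} yields the asserted closed form.

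The hard part will be the reduction step, i.e.\ establishing that $\partial^n f(i)=\vartheta^{[n]}f(i)$. The naive hope that ``$\partial$ specializes to $\vartheta_\phi$ at $i$'' fails because $\partial$ does not preserve the ideal generated by $\phi^*$: indeed $\partial\phi^*\equiv\psi\not\equiv0$ modulo $\phi^*$, so the $\phi^*$-free part of $\partial^n f$ depends not only on the $\phi^*$-free part of $\partial^{n-1}f$ but also on its coefficient of $\phi^*$. Controlling this coefficient is exactly what forces the three-term shape of \eqref{eqmodifiedSerre}, and is the reason the expansion above must be carried out to all orders in $\phi^*$ rather than merely modulo $(\phi^*)^2$. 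Everything else is the mechanical, if lengthy, verification of the $q$-expansion identities, which is why the result is obtained by closely following \cite[Proposition 28]{Zagier}.
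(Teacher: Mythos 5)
Your proof is correct and follows essentially the same route as the paper's: both reduce $\partial^n f(i)$ to the holomorphic values $\vartheta^{[n]}f(i)$ via the vanishing of $E_2^*$ at $i$, and then transfer the recursion \eqref{eqmodifiedSerre} to the one-variable polynomials $p_n$ using $M_*(\Gamma_0(4))=\C[\Theta,F_2]$ and weighted homogeneity. The only differences are cosmetic: where the paper cites the comparison of Cohen--Kuznetsov series \cite[Eq.~(68)]{Zagier}, you reprove that identity through the inductive expansion $\partial^n f=\sum_{s}\binom{n}{s}\frac{(k+n-1)!}{(k+n-1-s)!}(\phi^*)^s\,\vartheta^{[n-s]}f$ (which is exactly the expanded form of that generating-function identity, and your induction does close since the $\psi$-terms cancel), and your identities for $Dt$, $D\Theta/\Theta$, and $E_4$ are equivalent to the paper's formulas for $\vartheta\Theta$, $\vartheta F_2$, and $E_4$ as elements of $\C[\Theta,F_2]$.
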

\begin{proof}
Since the completed weight $2$ Eisenstein series $E_2^*$ vanishes at $i$, it follows by comparing the associated Cohen-Kusnetsov series (see \cite[Equation (68)]{Zagier}) that $\partial^nf(i)=\vartheta^{[n]}f(i)$ for all $n$. Since $\vartheta^{[n]}$ maps modular forms of weight $k$ to modular forms of weight $k+2n$, we can view $\vartheta^{[n]}$ as an operator on the polynomial ring $\C[\Theta,F_2]$. In particular, there is a polynomial $P_n(X,Y)\in\C[X,Y]$ such that $\vartheta^{[n]}f=P_n(\Theta,F_2)$. Explicitly, we compute
\begin{align*}
\vartheta \Theta &=\frac{1}{24}(80F_2\Theta-\Theta^5),\\
\vartheta F_2&= \frac{1}{6}(5\Theta^4F_2-16F_2^2),\\
E_4&= \Theta^8+224\Theta^4F_2+256 F_2^2.
\end{align*}
Hence we can write
\[\vartheta = \frac{1}{24}(80F_2\Theta-\Theta^5)\frac{\partial}{\partial \Theta}+\frac{1}{6}(5\Theta^4F-16F_2^2)\frac{\partial}{\partial F_2},\]
which yields the following recursion for $P_n$: 
\begin{align*}
P_{-1}(X,Y)&=0,\quad P_0(X,Y)=P(X,Y),\\
P_{n+1}(X,Y)&=\frac{1}{24}(-X^5+80XY)\frac{\partial P_n(X,Y)}{\partial X}+\frac{1}{6}(5X^4Y-16Y^2)\frac{\partial P_n(X,Y)}{\partial Y}\\
&\qquad\qquad\qquad -\frac{1}{144}n(n+k-1) (X^8+224X^4Y+256Y^2) P_{n-1}(X,Y).
\end{align*}
The polynomials $P_n(X,Y)$ are weighted homogeneous of weight $k+2n$, where $X$ has weight $1/2$ and $Y$ has weight $2$. Thus we can write $P_n(X,Y)=X^{4n+2k} p_n(Y/X^4)$, where $p_n(t)\in\C[t]$ is a single variable polynomial. Since
\begin{align*}
\frac{\partial P_n(X,Y)}{\partial X}
                                    &=X^{4n+2k-1}[(4n+2k)p_n(Y/X^4)-4(Y/X^4)p_n'(Y/X^4)]
\end{align*}
and 
\[\frac{\partial P_n(X,Y)}{\partial Y}=X^{4n+2k-4}p_n'(Y/X^4),\]
we find the following differential recursion for $p_n$:
\begin{align*}
p_{n+1}(t)
          &=\frac{1}{24}(80t-1)(4n+2k)p_n(t)-(16t^2-t)p_n'(t)\\
          &\qquad\qquad\qquad\qquad -\frac{1}{144}n(n+k-1) (256t^2+224t+1) p_{n-1}(t),
\end{align*}
i.e. together with the two  stated initial values the recursion claimed. Thus we have shown that
\[\vartheta^{[n]} f = \Theta^{4n+2k}p_n(F_2/\Theta^4),\]
hence
\[\partial^n f(i)=\vartheta^{[n]} f(i)=\Theta(i)^{4n+2k}p_n(F_2(i)/\Theta^4(i)),\]
which yields the claim using the values given in \eqref{eqvaluesi}.
\end{proof}
As an appplication of \Cref{propTaylori}, we offer the following example.
\begin{example}\label{exTaylori}
The Taylor coefficients of $\Theta$ at $\tau_0=i$ are given as follows,
\[(1-w)^{-1/2}\Theta\left(i\frac{1+w}{1-w}\right)=\Theta(i)\sum_{n=0}^\infty \frac{c(n)}{n!}(\Phi w)^n,\qquad (|w|<1),\]
where we choose $\Phi=\eps^4\pi\Omega_{-4}^2=\frac{(17+12\sqrt{2})\Gamma(1/4)^4}{16\pi^2}$, and where $\eps=1+\sqrt{2}$ is the fundamental unit in $\Q(\sqrt 2)$. Concretely, we compute the following table of values from the recursion in \Cref{propTaylori}.
\begin{center}
\begin{table}[h!]
\begin{tabular}{|c||c|c|c|c|c|c|c|c|c|c|c|c|c|}
\hline
$n$    & $0$ & $1$    & $2$ & $3$      & $4$  & $5$     & $6$        & $7$        & $8$     & $9$         & $10$      & $11$          \\
\hline
$c(n)$ & $1$ & $\eps$ & $1$ & $-3\eps$ & $17$ & $9\eps$ & $-111$ & $2373\eps$ & $12513$ & $86481\eps$ & $-146079$ & $-9806643\eps$ \\
\hline
\end{tabular}
\end{table}
\end{center}
The number $\Phi$ here has been chosen in order to make the coefficients $c(n)$ integers in $\Q(\sqrt 2)$, which one may verify by a straightforward induction argument.
In view of \Cref{DGlemma}, the period should be chosen depending on the prime modulus $p$ in order to find periodicity, but for the sake of uniformity, we keep this choice of period. By Fermat's Little Theorem, this still results in a periodic sequence modulo $p$ but with a longer period than with the choice in \Cref{DGlemma}. This motivates the notation $\overline{a_1,...,a_\ell}^b$ as a shorthand for 
\[a_1,...,a_\ell,ba_1,...,ba_\ell,b^2a_1,...,b^2a_\ell,...,\]
i.e. the \emph{quasiperiod} $a_1,...,a_\ell$ is multiplied by $b$ in each repetition. In other words, multiplying the chosen transcendental factor $\Phi$ by an $\ell$th root of $b$ yields an actually periodic coefficient sequence. 

Considering the first $200$ coefficients we find that
\begin{align*}
\{c(n)\}_{n=0}^\infty &\equiv \{1,\ \overline{\eps,\ 1}^2\} \pmod{5},\\
                      &\equiv \{1,\ \overline{\eps,\ 1,\ -3\eps,\ -8,\ 9\eps,\ -11,\ -2\eps, -12,\ 6\eps,\ -4}^7\}\pmod{5^2},
\end{align*}
and that $c(n)\equiv 57c(n+50)\pmod{5^3}$ for $n\geq 11$.

For $p=13$, we obtain
\begin{align*}
\{c(n)\}_{n=0}^\infty & \equiv \{1,\ \overline{\eps,1,-3\eps,-8,9\eps,-11,-2\eps,-12,6\eps,-4}^7\}\pmod{13}.
\end{align*}
\end{example}
With only a small alteration, we obtain the analogous result for the point $\z_7$.
\begin{proposition}\label{propTaylorz7}
Let $f\in M_k(\Gamma_0(4))$ with $k\in\frac12\Z$ and $P\in\C[X,Y]$ such that $f=P(\Theta,F_2)$. Then 
\begin{align*}
\partial^nf(\z_7)&=\left(\frac{8+3\sqrt{7}}{4}\right)^{n+k/2}q_n\left(-\frac{127-48\sqrt{7}}{16}\right)\Omega_{-7}^{2n+k}\\
&=q_n\left(-\frac{127-48\sqrt{7}}{16}\right)\Theta(\z_7)^{4n+2k}
\end{align*}
where $q_n(t)$ is defined recursively by 
\begin{align*}
q_{-1}(t)&=0,\quad q_0(t)=\frac{P(X,tX^4)}{X^{2k}},\\
q_{n+1}(t)&=\frac{1}{168}(592t-5)(2k+4n)q_n(t)-(16t^2-t)q_n'(t)\\
          &\qquad\qquad\qquad-\frac{1}{7056}n(n+k-1)(6400t^2+15584t+25)q_{n-1}(t)\qquad (n\geq 0).
\end{align*}
\end{proposition}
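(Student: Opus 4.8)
The plan is to follow the proof of \Cref{propTaylori} essentially verbatim, changing only the auxiliary weight-$2$ form used to build the modified Serre derivative. The single structural difference is this: the argument at $\tau_0=i$ relied on the fact that the completed Eisenstein series $E_2^*$ vanishes there, which is forced by the elliptic symmetry $E_2^*(i)=i^2E_2^*(i)$. At $\z_7$ there is no such symmetry and $E_2^*(\z_7)\neq 0$, so $\tfrac{1}{12}E_2$ is no longer an admissible base point for the recursion. Instead I would choose a weight-$2$ quasimodular form $\phi$ for $\Gamma_0(4)$ whose completion $\phi^*=\phi-\tfrac{1}{4\pi y}$ vanishes at $\z_7$, and then run the entire computation with $\vartheta_\phi$ and its iterates $\vartheta_\phi^{[n]}$ in place of the operator $\vartheta=\vartheta_{\frac{1}{12}E_2}$ used before.

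To produce $\phi$, note that $M_2(\Gamma_0(4))=\langle\Theta^4,F_2\rangle$, so every admissible $\phi$ has the shape $\phi=\tfrac{1}{12}E_2+\alpha\Theta^4+\beta F_2$, the coefficient $\tfrac{1}{12}$ of $E_2$ being fixed by the requirement $\phi^*=\phi-\tfrac{1}{4\pi y}$. The condition $\phi^*(\z_7)=0$ is then the single linear relation $\tfrac{1}{12}E_2^*(\z_7)+\alpha\Theta^4(\z_7)+\beta F_2(\z_7)=0$; together with a convenient normalization it is solved by $\phi=\tfrac{1}{12}E_2-\tfrac{1}{42}\Theta^4-\tfrac{8}{21}F_2$. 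Granting $\phi^*(\z_7)=0$, the comparison of Cohen--Kusnetsov series used in \Cref{propTaylori} (via \cite[Equation (68)]{Zagier}) gives $\partial^nf(\z_7)=\vartheta_\phi^{[n]}f(\z_7)$ for all $n$, exactly as before.

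The rest is a transcription. Since $\vartheta_\phi$ is a derivation on $\C[\Theta,F_2]$, I would record its action on the generators, which from $\vartheta\Theta$ and $\vartheta F_2$ in \Cref{propTaylori} comes out to
\[
\vartheta_\phi\Theta=\tfrac{1}{168}(592F_2\Theta-5\Theta^5),\qquad \vartheta_\phi F_2=\tfrac{1}{42}(37\Theta^4F_2-80F_2^2),
\]
together with $\psi=D\phi-\phi^2=-\tfrac{1}{7056}(25\Theta^8+15584\Theta^4F_2+6400F_2^2)\in M_4(\Gamma_0(4))$. Writing $\vartheta_\phi^{[n]}f=P_n(\Theta,F_2)$ with $P_n$ weighted-homogeneous of weight $k+2n$, hence $P_n=\Theta^{4n+2k}q_n(F_2/\Theta^4)$, and substituting these expressions into the iterated recursion \eqref{eqmodifiedSerre} using the same partial-derivative identities for $P_n$ as in \Cref{propTaylori} produces precisely the stated recursion for $q_n$. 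Finally, evaluating at $\z_7$ via \eqref{eqvaluesz7}, where $F_2(\z_7)/\Theta(\z_7)^4=-\tfrac{127-48\sqrt{7}}{16}$ and $\Theta(\z_7)^{4n+2k}=(\tfrac{8+3\sqrt{7}}{4})^{n+k/2}\Omega_{-7}^{2n+k}$, yields $\partial^nf(\z_7)=\Theta(\z_7)^{4n+2k}q_n(-\tfrac{127-48\sqrt{7}}{16})$, which is the claim.

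The only genuinely new point, and hence the main obstacle, is the determination of $\phi$, i.e.\ verifying that $\phi^*$ vanishes at $\z_7$. Unlike the automatic vanishing $E_2^*(i)=0$, this requires the single CM value $E_2^*(\z_7)$ of the nearly holomorphic weight-$2$ Eisenstein series, which is not among the holomorphic values listed in \eqref{eqvaluesz7}; explicitly, one needs $E_2^*(\z_7)=\tfrac{3\sqrt{7}}{7}\Omega_{-7}^2$. Once this value is in hand, all remaining steps are mechanical repetitions of \Cref{propTaylori}, which is why only the alteration needs to be spelled out.
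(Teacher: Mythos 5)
Your proposal is correct and takes essentially the same approach as the paper: the paper's proof chooses the identical $\phi=\tfrac{1}{12}E_2-\tfrac{1}{42}(\Theta^4+16F_2)$ (your coefficient $-\tfrac{8}{21}F_2$ is the same as $-\tfrac{16}{42}F_2$), verifies $\phi^*(\z_7)=0$ from the value $E_2^*(\z_7)=\tfrac{3}{\sqrt{7}}\Omega_{-7}^2$ (cited there from Zagier's table rather than derived), and obtains the same formulas for $\vartheta_\phi\Theta$, $\vartheta_\phi F_2$, and $\psi$ before repeating the recursion argument of \Cref{propTaylori}.
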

\begin{proof}
Let $\phi=\tfrac{1}{12}E_2-\tfrac{1}{42}(\Theta^4+16F_2)$, whence 
\[\psi=D\phi-\phi^2=-\frac 1{7056}\left(25\Theta^8+15584\Theta^4F_2+6400F_2^2\right).\] 
Then $\phi$ is a quasimodular form of weight $2$ for $\Gamma_0(4)$ and $\phi^*=\tfrac{1}{12}E_2^*-\tfrac{1}{42}(\Theta^4+16F_2)=\phi-\frac{1}{4\pi y}$ transforms like a modular form. Since $E_2^*(\z_7)=\tfrac{3}{\sqrt{7}}\Omega_{-7}^2$ (cf. \cite[Table on p. 87]{Zagier}), one sees easily by comparing to the values given in \eqref{eqvaluesz7} that $\phi^*(\z_7)=0$, wherefore it follows, as in the proof of \Cref{propTaylori}, that $\partial^n f(\z_7)=\vartheta_\phi^{[n]} f(\z_7)$. The action of $\vartheta_\phi^{[n]}$ on the polynomial algebra $\C[\Theta,F_2]$ is determined by
\begin{align*}
\vartheta_\phi \Theta&= -\frac{1}{168}\left(5\Theta^5-592\Theta F_2\right),\\
\vartheta_\phi F_2&= \frac{1}{42}\left(37\Theta^4 F_2-80F_2^2\right),
\end{align*}
as one can easily verify. The proof now follows the exact same lines as that of \Cref{propTaylori}, so we leave the rest to the reader.
\end{proof}
\begin{example}
We apply \Cref{propTaylorz7} to the Cohen-Eisenstein series 
\begin{align*}
\calH_{5/2}(\tau)&=\frac{1}{120}\left(\Theta^5(\tau)-20\Theta(\tau) F_2(\tau)\right)\\
                 &=\frac{1}{120}\left(1 - 10q - 70q^4 - 48q^5 - 120q^8 - 250q^9 - 240q^{12} - 240q^{13} + O(q^{16})\right)
                 \end{align*}
of weight $5/2$. Choosing $\Phi=\frac{\sqrt{7}}{2}\pi\Omega_{-7}^2=\frac{(\Gamma(1/7)\Gamma(2/7)\Gamma(4/7))^2}{32\pi^3}$ and setting $\eps=8-3\sqrt{7}$ the fundamental unit of the field $\Q(\sqrt 7)$ we find that
\[(1-w)^{-5/2}\calH_{5/2}\left(\frac{\z_7-\overline{\z_7}w}{1-w}\right)=\frac{\Theta(\z_7)^5}{480\eps}\sum_{n=0}^\infty \frac{d(n)}{n!}(\Phi w)^n,\]
with the first few of the numbers $d(n)$ being given by
\begin{align*}
d(n)=&-3\sqrt{7} + 72,\ -60\sqrt{7} - 265,\ 1105\sqrt{7} + 1160, -6300\sqrt{7} - 30705,\\
& 130485\sqrt{7} + 366600,\ -2715900\sqrt{7} - 5323465,\ 38437065\sqrt{7} + 146660040, \\
& -1220829660\sqrt{7} - 2376737265,\ 24402981165\sqrt{7} + 78627988680...
\end{align*}
The $d(n)$ are normalized so that they are integers in $\Q(\sqrt{7})$ which one can verify again by an induction analogous to the one employed in \Cref{exTaylori}. Note that this is not possible if we normalize so that the leading coefficient is $1$. The factored norms of these numbers are given by
\begin{align*}
\Nm(d(n))=&3^2\cdot 569,\ 5^2\cdot 1801,\ -3^3\cdot 5^2\cdot 47\cdot 227,\ 3^2\cdot 5^2\cdot 193\cdot 15313, \\
        &3^3\cdot 5^2\cdot  22535131,\ -5^2\cdot 7\cdot 401\cdot 331934593,\ 3^4\cdot 5^2\cdot 5514721764001,\\
        &-3^2\cdot 5^2\cdot 7\cdot 2797\cdot 1085992448669,\ 3^4\cdot 5^2\cdot 139\cdot 7154532998265547...
\end{align*}
Note that the factor $569$ in the norm of $d(0)$ also occurs in the norm of the singular modulus
\[\alpha:=\frac{\calH_{5/2}(\z_7)}{\Theta^5(\z_7)}=\frac{1065-400\sqrt{7}}{800},\]
which equals $\Nm(\alpha)=2^{-10}\cdot 5^{-2}\cdot 569$. 
For practical reasons, we look at the norms of the numbers $d(n)$ modulo $11$ and, computing the first 1000 of them, we find that
$\Nm(d(n))\equiv 3\Nm(d(n+110)\pmod{11}$ for $n\geq 3$. 
\end{example}

\end{document}